\begin{document}
\newtheorem{thm}{Theorem}[section]
\newtheorem{lem}[thm]{Lemma}
\newtheorem{prop}[thm]{Proposition}
\newtheorem{cor}[thm]{Corollary}
\theoremstyle{definition}
\newtheorem{ex}[thm]{Example}
\newtheorem{rem}[thm]{Remark}
\newtheorem{prob}[thm]{Problem}
\newtheorem{thmA}{Theorem}
\renewcommand{\thethmA}{}
\newtheorem{defi}[thm]{Definition}
\renewcommand{\thedefi}{}
\input amssym.def
\long\def\alert#1{\smallskip{\hskip\parindent\vrule%
\vbox{\advance\hsize-2\parindent\hrule\smallskip\parindent.4\parindent%
\narrower\noindent#1\smallskip\hrule}\vrule\hfill}\smallskip}
\def\ff{\frak}
\def\Spec{\mbox{\rm Spec}}
\def\type{\mbox{ type}}
\def\Hom{\mbox{ Hom}}
\def\rank{\mbox{ rank}}
\def\Ext{\mbox{ Ext}}
\def\Ker{\mbox{ Ker}}
\def\Max{\mbox{\rm Max}}
\def\End{\mbox{\rm End}}
\def\l{\langle\:}
\def\r{\:\rangle}
\def\Rad{\mbox{\rm Rad}}
\def\Zar{\mbox{\rm Zar}}
\def\Supp{\mbox{\rm Supp}}
\def\Rep{\mbox{\rm Rep}}
\def\cal{\mathcal}
\title[Stone and strongly complete MV-algebras]{Stone MV-algebras and Strongly complete MV-algebras}
\thanks{2010 Mathematics Subject Classification.
06D35, 06E15, 06D50\\Key words: topological MV-algebra, Stone MV-algebra, zero-dimensional, profinite MV-algebra.}
\thanks{\today}
\author{Jean B. Nganou}
\address{Department of Mathematics, University of Oregon, Eugene,
OR 97403} \email{nganou@uoregon.edu}
\begin{abstract} The article has two main objectives: characterize compact Hausdorff topological MV-algebras and Stone MV-algebras on one hand, and characterize strongly complete MV-algebras on the other hand. We obtain that compact Hausdorff topological MV-algebras are product (both topological and algebraic) of copies $[0,1]$ with the interval topology and finite \L ukasiewicz chains with discrete topology. Going one step further we also prove that Stone MV-algebras are product (both topological and algebraic) of finite \L ukasiewicz chains with discrete topology. In the second part we prove that an MV-algebra is isomorphic to its profinite completion if and only if it is profinite and its only maximal ideals of finite ranks are principal.
\vspace{0.20in}\\
{\noindent} \begin{center}\textit{To the memory of a great mentor, John V. Leahy (1937-2015).}\end{center}
\end{abstract}
\maketitle
\section{Introduction}
An MV-algebra can be defined as an Abelian monoid $(A,\oplus , 0)$ with an involution $\neg:A\to A$ (i.e., $\neg \neg x=x$ for all $x\in A$) satisfying the following axioms for all $x, y\in A$: $\neg 0\oplus x=\neg0$,
$\neg(\neg x\oplus y)\oplus y=\neg(\neg y\oplus x)\oplus x$. For any $x,y\in A$, if one writes $x\leq y$ when $\neg x\oplus y=\neg 0:=1$, then $\leq$ induces a partial order on $A$, which is in fact a distributive lattice order where $x\vee y=\neg(\neg x\oplus y)\oplus y$ and $x\wedge y=\neg(\neg x\vee \neg y)$. MV-algebras form an equational class that contains the variety of Boolean algebras (as MV-algebras satisfying $x\wedge \neg x=0$). An ideal of an MV-algebra is a nonempty subset $I$ of $A$ such that (i) for all $x,y\in I$, $x\oplus y\in I$ and (ii) for all $x\in A$ and $y\in I$, $x\leq y$ implies $x\in I$. A prime ideal of $A$ is proper ideal $P$ such that whenever $x\wedge y\in P$ with $x, y\in A$, then $x\in P$ or $y\in P$. Maximal ideal has the usual meaning. \par
A topological MV-algebra is an MV-algebra $(A,\oplus, \neg, 0)$ together with a topology $\tau$ such that $\oplus$ and $\neg$ (and in particular $\vee$, $\wedge$) are $\tau$-continuous. A detailed treatment of topological MV-algebras, a category that is clearly a subcategory of topological lattices can be found in \cite{Hoo}. A topological MV-algebra is called a Stone MV-algebra if its topology is Stone (compact, Hausdorff, and zero-dimensional). The prototype of (topological) MV-algebra is the unit real interval $[0,1]$ equipped with the operation of truncated addition $x\oplus y=\text{min}\{x+y,1\}$, negation $\neg x=1-x$, and the identity element $0$.  For each integer $n\geq 2$, $\L_n=[0,1]\cap \mathbb{Z}\dfrac{1}{n-1}$ is an MV-subalgebra of $[0,1]$, (the \L ukasiewicz chain with $n$ elements), and up to isomorphism every nontrivial finite MV-chain is of this form. Each $\L_n$ ($n\geq 2$) is clearly a Stone MV-algebra under the discrete topology. \par
A great deal of research on topological lattices has gravitated around the following important problems.
\begin{prob}\label{pb1}
An algebra is profinite if it is the inverse limit of an inverse system of finite algebras. Since the inverse limit of an inverse family of finite discrete spaces is a Stone space, each profinite algebra is naturally a Stone topological algebra. A question of interest that arises is: for what algebraic categories $\mathbb{A}$ is the converse true?, that is every Stone $\mathbb{A}$-algebra is profinite?
\end{prob}
\begin{prob}\label{pb2}
 For what algebraic categories $\mathbb{A}$ is zero-dimensionality automatic for Stone $\mathbb{A}$-algebras? In other words, what algebraic categories $\mathbb{A}$ satisfy: an $\mathbb{A}$-algebra is Stone if and only if it is compact and Hausdorff?
 \end{prob}
Most of \cite[Section VI.2.6]{Jo} is devoted to investigating problem \ref{pb1}. It is shown that the categories of Boolean algebras, of distributive lattices, of Heyting algebras all satisfy the property stated in problem \ref{pb1}. \par
As for problem \ref{pb2}, the category of Boolean algebras is most likely the best known example. Indeed, a result of Strauss \cite{DPS} states that a Boolean algebra admits a Stone topology if and only if it admits a compact and Hausdorff topology if and only if it is a power set algebra. Other well-known categories with this property include the category of orthomodular lattices \cite[Lemma 3]{CG}, and the category of Heyting algebras \cite{BH2}. A new and simpler proof of Strauss's result can be found in \cite{BH1}.\par
During the author's BLAST 2014's presentation on ``profinite MV-algebras and multisets" \cite{jbn}, several attendees asked about the position of the category of MV-algebras with respect to the above problems. The primary goal of the present paper is to provide a complete answer to those questions and study related problems. \par
With respect to problem \ref{pb1}, we obtain that the category of MV-algebras is another is another category with the stated property. Indeed, we prove that an MV-algebra admits a 
Stone topology if and only if it is profinite if and only if it is the product of finite \L ukasiewicz chains. \par
With respect to problem \ref{pb2}, since $[0,1]$ with the interval topology is clearly a compact Hausdorff topological MV-algebra (and this is the only topology making $[0,1]$ a topological MV-algebra), which is not zero-dimensional, then the category of MV-algebras does not have the property of problem \ref{pb2}. Indeed, we obtain that MV-algebras admitting compact Hausdorff topologies are product of copies $[0,1]$ and finite \L ukasiewicz chains.\par
In the final section of the article, we consider the problem of MV-algebras that are isomorphic to their profinite completions. This question has been answered for Boolean algebras and Heyting algebras. Indeed, it is known that a Boolean algebra is isomorphic to its profinite completion if and only if it is finite (see for e.g., comment after the proof of \cite[Theorem 4.6]{GG}), while a Heyting algebra $H$ is isomorphic to its profinite completion if and only if it is finitely approximable, complete, and the kernel of every finite homomorphic image of $H$ is a principal filter of $H$ \cite[Theorem 3.10]{GG}. For MV-algebras, we prove that an MV-algebra whose all maximal ideals have finite ranks is isomorphic to its profinite completion (called here strongly complete) if and only if it is finite. It is also proved that a general MV-algebra is strongly complete if and only if it is profinite and all its maximal ideals of finite ranks are principal.\par
For basic MV-algebras terminologies, the reader may refer to \cite{C2}.
\section{Compact Hausdorff topological MV-algebras}
We recall that the interval topology on any poset $(A,\leq)$ is the smallest topology for which all the sets of the form $$[a,b]:=\{x\in A:a\leq x\leq b\}$$
($a, b\in A$, $a\leq b$) are closed. It is also known that the interval topology of any complete distributive complete lattice is compact and Hausdorff (see e.g., \cite[Corollary VI.1.11]{Jo}). In particular, the interval topology of any complete chain is compact and Hausdorff, and is indeed by \cite[Theorem VII.1.9]{Jo}, the only compact Hausdorff lattice topology on the chain. In addition, compact topological MV-algebras are complete (see, e.g., \cite[Theorem 3.41]{Hoo}), and (up to isomorphism) the only complete MV-chains are $[0,1]$ and $\L_n$ for some $n\geq 2$. It follows that up to homeomorphism, the only compact Hausdorff topological MV-chains are the unit interval with the interval topology and finite MV-chains with the discrete topologies. In particular, the interval topology is the only compact Hausdorff topology on the unit interval $[0,1]$ with respect to which the functions $(x,y)\mapsto \min(x+y, 1)$ and $x\mapsto 1-x$ are continuous.
\begin{lem}\label{ct2}
Let $A=\prod_{x\in X}A_x$, where $A_x=[0,1]$ or $A_x$ is a finite MV-chain. \\
Then, the only compact Hausdorff topology on $A$ that makes $A$ a topological MV-algebra is the natural product topology. That is the product topology, where each copy of $[0,1]$ carries the interval topology, while each finite MV-chain has discrete topology.
\end{lem}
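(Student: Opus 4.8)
The plan is to show that any compact Hausdorff MV-algebra topology on the product $A = \prod_{x\in X} A_x$ must coincide with the natural product topology $\tau_{\text{prod}}$ (product of interval topologies on the $[0,1]$ factors and discrete topologies on the finite factors). The key observation is that $\tau_{\text{prod}}$ is \emph{itself} a compact Hausdorff MV-algebra topology: by Tychonoff each factor is compact (each $[0,1]$ is compact in the interval topology, each finite chain is compact and discrete), the product of Hausdorff spaces is Hausdorff, and $\oplus, \neg$ are continuous in the product topology because they are computed coordinatewise from continuous operations. So the real content is a \emph{uniqueness} statement: there is at most one compact Hausdorff MV-topology on $A$.

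**First I would** try to reduce uniqueness to the lattice/order structure, exploiting the preamble's remark that on a complete distributive lattice the interval topology is the canonical compact Hausdorff lattice topology. Since a compact topological MV-algebra is complete (cited as \cite[Theorem 3.41]{Hoo}) and $A$, being a product of complete chains, is a complete distributive lattice, the natural candidate is to prove that \emph{any} compact Hausdorff topology making $A$ a topological MV-algebra must equal the interval topology of the lattice $A$, and then separately identify the interval topology of this particular product lattice with $\tau_{\text{prod}}$. The first half is the analogue for MV-algebras of the uniqueness of compact Hausdorff lattice topologies on chains; the standard mechanism is that in a compact Hausdorff topological lattice the order is closed, its order-convergence agrees with topological convergence, hence the topology is order-determined and therefore forced.

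**The main obstacle** will be the second half: proving that the interval topology of the product lattice $A$ genuinely coincides with the product topology $\tau_{\text{prod}}$. This is delicate because the interval topology does \emph{not} commute with arbitrary products in general — the interval topology of a product of lattices can be strictly finer than the product of the interval topologies. I would control this by first establishing that the candidate $\tau_{\text{prod}}$ is a compact Hausdorff lattice topology on $A$ (done above), then invoking a uniqueness principle for compact Hausdorff lattice topologies on complete distributive lattices to conclude it must equal the interval topology, rather than trying to compute the interval topology from its definition. Concretely, since both $\tau_{\text{prod}}$ and any hypothetical compact Hausdorff MV-topology $\tau$ are compact Hausdorff topologies on the same set making the lattice operations continuous, and a Hausdorff topology cannot be strictly coarser than a compact one without the two coinciding, comparing $\tau$ against $\tau_{\text{prod}}$ via the identity map $(A,\tau_{\text{prod}}) \to (A,\tau)$ forces equality once one checks the identity is continuous in one direction.

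**The final step** is the continuity check: I would verify that the projection maps $\pi_x : A \to A_x$ are $\tau$-continuous for \emph{any} compact Hausdorff MV-topology $\tau$, which makes the identity $(A,\tau) \to (A,\tau_{\text{prod}})$ continuous by the universal property of the product topology; a continuous bijection from a compact space to a Hausdorff space is a homeomorphism, yielding $\tau = \tau_{\text{prod}}$. Establishing $\tau$-continuity of each $\pi_x$ is where the MV-structure must do real work — I expect to realize each projection as an MV-homomorphism and argue that, on a compact topological MV-algebra, such homomorphisms (or at least the finitely-many-valued and order-theoretic ones underlying the coordinate maps) are automatically continuous, using the completeness of $(A,\tau)$ and the order-closedness of $\tau$.
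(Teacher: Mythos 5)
Your overall architecture --- first verify that the natural product topology $\mathfrak{p}$ is itself a compact Hausdorff MV-algebra (hence lattice) topology on $A$, then argue that there can be at most one such topology --- is exactly the paper's. The paper's entire proof consists of those two moves: the Tychonoff-style verification, followed by a citation of \cite[Theorem VII.1.9]{Jo}, the same uniqueness theorem for compact Hausdorff lattice topologies that the preamble already invoked for chains. Had you stopped where your third paragraph invokes ``a uniqueness principle for compact Hausdorff lattice topologies,'' you would have reproduced the paper's argument. Note also that the paper never routes through the interval topology inside this lemma, so the obstacle you (rightly) flag --- that the interval topology of a product need not be the product of the interval topologies --- simply does not arise; the identification of $\mathfrak{p}$ with the interval topology is deduced \emph{afterwards}, in the remark following the lemma, precisely from the uniqueness statement.

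The genuine gap is in your final step, which you present as the actual mechanism replacing that citation. The claim that each projection $\pi_x\colon (A,\tau)\to A_x$ is automatically $\tau$-continuous is only asserted (``I expect to realize \dots and argue \dots''), and it carries the entire content of the uniqueness assertion: once the projections are continuous, the compact-to-Hausdorff bijection argument finishes immediately, so nothing has been reduced. Moreover, automatic continuity of abstract homomorphisms out of compact topological algebras is false in general --- the circle group admits $2^{2^{\aleph_0}}$ abstract automorphisms, of which only two are continuous --- so some genuinely order-theoretic input is mandatory, not optional. The input you gesture at (completeness of compact topological MV-algebras, order-closedness, topological convergence agreeing with order-convergence) is precisely the machinery behind \cite[Theorem VII.1.9]{Jo}; developing it from scratch would amount to reproving that theorem. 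As written, your proposal neither cites the uniqueness theorem as a black box nor supplies its proof. To repair it, either rest uniqueness on the citation (as the paper does), or actually carry out the order-convergence argument: show that in a compact Hausdorff topological lattice topological convergence coincides with order-convergence, so the topology is determined by the lattice order, and then observe that $\pi_x$ preserves the relevant order limits because arbitrary joins and meets in $A$ are computed coordinatewise.
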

\begin{proof}
Let $\mathfrak{p}$ be the product topology on $A$, where each copy of $[0,1]$ carries the interval topology, while each finite MV-chain has discrete topology. Then, $\mathfrak{p}$ is a compact Hausdorff topology on $A$ making it a topological MV-algebra (in particular a topological lattice). It follows from \cite[Theorem VII.1.9]{Jo} that $\mathfrak{p}$ is the only compact Hausdorff topology on $A$ that makes $A$ a topological MV-algebra.
\end{proof}
Note that if $A=\prod_{x\in X}A_x$, where $A_x=[0,1]$ or $A_x$ is a finite MV-chain, then $A$ is complete and completely distributive. Therefore, the interval topology on $A$ is a compact Hausdorff topology on $A$ that makes $A$ a topological MV-algebra. It follows from the Lemma that the product topology on $A$ coincides with the interval topology. \par
Next, we characterize all compact Hausdorff MV-algebras.
\begin{thm}\label{CH}
For every MV-algebra $A$, the following assertions are equivalent:
\begin{itemize}
\item[1.] $A$ admits a topology making it a compact and Hausdorff topological MV-algebra;
\item[2.] $A$ is complete and completely distributive;
\item[3.] $A$ is isomorphic to $\prod_{x\in X}A_x$, where $A_x=[0,1]$ or $A_x$ is a finite MV-chain. 
\item[4.] $A$ is isomorphic to a closed MV-subalgebra of an MV-algebra of the form $[0,1]^X$. 
\end{itemize}
\end{thm}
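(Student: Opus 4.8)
The plan is to establish all four conditions through the cycle $(3)\Rightarrow(4)\Rightarrow(1)\Rightarrow(2)\Rightarrow(3)$, disposing of the two purely topological implications first. For $(3)\Rightarrow(4)$, if $A=\prod_{x\in X}A_x$ with each $A_x$ equal to $[0,1]$ or a finite MV-chain, then each $A_x$ is a closed MV-subalgebra of $[0,1]$ (a finite chain being a finite, hence closed, subset), so $A$ sits inside $[0,1]^X$ as the product of these closed sets, which is closed in the product topology; this realizes $A$ as a closed MV-subalgebra of $[0,1]^X$. For $(4)\Rightarrow(1)$, the power $[0,1]^X$ is a compact (Tychonoff) Hausdorff topological MV-algebra, and a closed subspace that is also a subalgebra inherits compactness, the Hausdorff property, and continuity of $\oplus$ and $\neg$ in the subspace topology; hence $A$ carries a compact Hausdorff topological MV-algebra structure. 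Both of these steps are routine.

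For $(1)\Rightarrow(2)$, completeness is immediate from \cite[Theorem 3.41]{Hoo}, according to which every compact topological MV-algebra is complete. The substantive point is complete distributivity. Here I would argue lattice-theoretically: since $A$ is a compact Hausdorff topological MV-algebra it is in particular a compact Hausdorff topological distributive lattice, so both its meet- and join-semilattice reducts are compact Hausdorff topological semilattices. Appealing to the theory of continuous lattices (a suitable compact Hausdorff topological semilattice is a continuous lattice, and a distributive lattice in which both $L$ and its order-dual are continuous is completely distributive), one concludes that $A$ is completely distributive. This is the step that must be handled with the most care, since passing from ``compact Hausdorff topological semilattice'' to ``continuous lattice'' requires the ambient topology to be the Lawson topology; I would secure this by using the uniqueness of the compact Hausdorff lattice topology (\cite[Theorem VII.1.9]{Jo}) to identify the given topology with the interval topology.

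The heart of the argument is $(2)\Rightarrow(3)$, the structure theorem, which I would prove by decomposing $A$ through its Boolean center $B(A)=\{a\in A:a\oplus a=a\}$. For Boolean elements $\oplus$ agrees with $\vee$, so complete distributivity of $A$ shows that arbitrary suprema of idempotents are again idempotent; dualizing via $\neg$, $B(A)$ is a complete sublattice of $A$ closed under all suprema and infima, hence itself complete and completely distributive. By Tarski's theorem a completely distributive complete Boolean algebra is atomic, so $B(A)$ is a power set algebra whose atoms $\{e_x\}_{x\in X}$ partition the unit. Using completeness to form $a=\bigvee_x(a\wedge e_x)$ and complete distributivity to let $\wedge$ commute with these suprema, I would show the map $a\mapsto(a\wedge e_x)_{x\in X}$ is an MV-isomorphism of $A$ onto $\prod_{x\in X}A_x$, where $A_x=\{a\in A:a\le e_x\}$; injectivity uses $\bigvee_x e_x=1$ and surjectivity uses $e_x\wedge e_y=0$ for $x\neq y$. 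Each factor $A_x$ is again complete and completely distributive, and it is directly indecomposable because $B(A_x)=\{0,e_x\}$. The crux, and the step I expect to be the main obstacle, is to prove that such a directly indecomposable, complete, completely distributive MV-algebra is totally ordered; once that is in hand, $A_x$ is a complete MV-chain, and by the classification recalled before Lemma \ref{ct2} it must be $[0,1]$ or a finite MV-chain, which completes $(2)\Rightarrow(3)$ and closes the cycle.
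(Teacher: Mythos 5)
Your overall architecture (the cycle, the routine implications $(3)\Rightarrow(4)\Rightarrow(1)$, and the appeal to \cite[Theorem 3.41]{Hoo} for completeness) agrees with the paper, but each of the two substantive implications has a genuine gap. In $(1)\Rightarrow(2)$, the step ``a suitable compact Hausdorff topological semilattice is a continuous lattice'' is the Fundamental Theorem of Compact Semilattices, and it requires the semilattice to have \emph{small semilattices}; Lawson showed this hypothesis is not automatic, i.e., there exist compact Hausdorff topological semilattices that are not continuous lattices. Your proposed fix --- invoking the uniqueness of the compact Hausdorff lattice topology \cite[Theorem VII.1.9]{Jo} to identify the given topology with the interval topology --- does not repair this: uniqueness tells you \emph{which} topology is present, not that the lattice is (bi)continuous, and nothing in your argument exploits the continuity of $\oplus$ and $\neg$ beyond their lattice consequences, so the MV-structure that must carry the implication never enters. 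The paper proceeds quite differently and MV-specifically: the map $f(a)=a\wedge \neg a$ is continuous and $B(A)=f^{-1}(\{0\})$, so the Boolean center is a closed, hence compact Hausdorff, topological Boolean algebra; Strauss's theorem \cite{DPS} then makes $B(A)$ a powerset, in particular atomic; and completeness of $A$ (Hoo) together with atomicity of $B(A)$ yields complete distributivity by \cite[Proposition 6.8.1]{C2} or \cite[Theorem 2.6]{Ci}.

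In $(2)\Rightarrow(3)$, your reduction is correct as far as it goes (suprema of idempotents are idempotent, Tarski's theorem making $B(A)$ an atomic powerset, the isomorphism $a\mapsto (a\wedge e_x)_{x\in X}$ onto $\prod_{x\in X}[0,e_x]$, and $B([0,e_x])=\{0,e_x\}$), but this is only the easy half: the claim you explicitly defer --- that a directly indecomposable, complete, completely distributive MV-algebra is totally ordered --- is the entire content of the structure theorem, and you neither prove it nor cite it. It is precisely \cite[Proposition 6.8.1]{C2} (a complete MV-algebra is completely distributive if and only if it is a direct product of complete MV-chains), which the paper invokes for the whole implication before using the classification of complete MV-chains as $[0,1]$ and the finite \L ukasiewicz chains $\L_n$. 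As written, then, your proposal is an architecture with its two load-bearing steps missing: one rests on a lattice-theoretic principle that fails without a hypothesis you have no means to verify, and the other on an MV-algebraic theorem left as an acknowledged ``main obstacle''. Patching both by citation (\cite[Proposition 6.8.1]{C2} or \cite[Theorem 2.6]{Ci}, and Strauss \cite{DPS} applied to the closed Boolean center) would bring your argument essentially to the paper's proof.
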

\begin{proof}
$1.\Rightarrow 2.$ Suppose that $A$ admits a topology making it a compact and Hausdorff topological MV-algebra. Consider the Boolean center $B(A)$ of $A$ equipped with the subspace topology of $A$. Recall that $B(A)=\{a\in A:a\wedge \neg a=0\}=f^{-1}(\{0\})$, where $f:A\to A$ is the map defined by $f(a)=a\wedge \neg a$. Since $f$ is clearly continuous and $\{0\}$ is closed, then $B(A)$ is a closed subspace of the compact space $A$. Therefore, $B(A)$ admits a compact and Hausdorff topological making it a topological Boolean algebra. Consequently, $B(A)$ is a power set algebra as proved by Strauss \cite{DPS}, in particular, $B(A)$ is atomic. In addition, since compact Hausdorff MV-algebras are complete (see e.g., \cite[Theorem 3.41]{Hoo}), then $A$ is complete and by \cite[Proposition 6.8.1]{C2} or \cite[Theorem 2.6]{Ci}, $A$ is complete and completely distributive.\\
$2.\Rightarrow 3.$ Suppose that $A$ is complete and completely distributive. Then, by \cite[Proposition 6.8.1]{C2} $A$ is the direct product of complete MV-chains. But (up to isomorphism), the only complete MV-chains are finite MV-chain and $[0,1]$.\\
$3.\Rightarrow 4.$ Let $S=:\prod_{x\in X}A_x$, where $A_x=[0,1]$ or $A_x$ is a finite MV-chain. Then $A$ is isomorphic to $S$, which is  clearly a closed MV-subalgebra of an MV-algebra of the form $[0,1]^X$. \\
$4.\Rightarrow 1.$ $A$ is isomorphic to a closed MV-subalgebra $S$ of an MV-algebra of the form $[0,1]^X$. Then, with the subspace topology, $S$ is a compact Hausdorff topological MV-algebra. Consider the topology  
\end{proof}
It follows from Theorem \ref{CH} that any two compact Hausdorff topological MV-algebras that are isomorphic are automatically homeomorphic.\\
When zero-dimensionality is added to the characterization of Theorem \ref{CH}, one obtains a simple characterization of Stone MV-algebras. First, we observe that $[0,1]\times \L_2$ is not a Stone MV-algebra. Indeed, $[0,1]\times \{0\}$ is a connected subset of $[0,1]\times \L_2$, which is not a singleton. Therefore, $[0,1]\times \L_2$ is not totally disconnected.
\begin{thm}
For every MV-algebra $A$, the following assertions are equivalent:
\begin{itemize}
\item[1.] $A$ admits a topology making it a Stone topological MV-algebra;
\item[2.] $A$ admits a topology making it a Stone topological lattice;
\item[3.] $A$ is isomorphic to $\prod_{x\in X}A_x$, where each $A_x$ is a finite MV-chain;
\item[4.] $A$ admits a topology making it a compact Hausdorff topological MV-algebra and $[0,a]$ is a finite MV-chain for all atoms $a$ of $B(A)$;
\item[5.] $A$ is profinite.
\end{itemize}
\end{thm}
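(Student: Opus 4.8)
The plan is to close the loop $3\Rightarrow 5\Rightarrow 1\Rightarrow 2\Rightarrow 3$ and to treat $3\Leftrightarrow 4$ as a separate pair; the only substantial implication is $2\Rightarrow 3$. The three cheap links are quick. For $1\Rightarrow 2$, the lattice operations $\vee,\wedge$ are term-definable from $\oplus,\neg$, so continuity of the MV-operations forces continuity of the lattice operations, and a Stone topological MV-algebra is a fortiori a Stone topological lattice. For $3\Rightarrow 5$, the identity $\prod_{x\in X}A_x=\varprojlim_{F}\prod_{x\in F}A_x$ over the finite subsets $F\subseteq X$ exhibits $A$ as an inverse limit of finite MV-algebras. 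And $5\Rightarrow 1$ is exactly the observation recorded in Problem \ref{pb1}, that an inverse limit of finite discrete MV-algebras carries a Stone topology under which the operations are continuous.

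For the crux $2\Rightarrow 3$ I would mimic the Boolean-center argument of Theorem \ref{CH}, adding total disconnectedness at the very end. Let $(A,\tau)$ be a Stone, hence compact Hausdorff, topological lattice. By uniqueness of the compact Hausdorff lattice topology \cite[Theorem VII.1.9]{Jo}, $\tau$ is the interval topology, and $A$ is a complete lattice. The key extra point over Theorem \ref{CH} is that the MV-negation $\neg$, being an order-reversing involution, carries each subbasic closed ray $[a,1]$ to $[0,\neg a]$ and vice versa, so $\neg$ is a homeomorphism of the interval topology and in particular $\tau$-continuous. Consequently $f(a)=a\wedge\neg a$ is continuous and the Boolean center $B(A)=f^{-1}(\{0\})$ is a closed, therefore Stone, topological Boolean algebra; by Strauss \cite{DPS} it is a power set algebra, hence atomic. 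A complete MV-algebra with atomic Boolean center is completely distributive (\cite[Theorem 2.6]{Ci}), hence a direct product $A\cong\prod_{x\in X}C_x$ of complete MV-chains (\cite[Proposition 6.8.1]{C2}), so each $C_x=[0,1]$ or $C_x$ is finite. Finally, since by the same uniqueness result $\tau$ coincides with the product topology, a factor $C_x=[0,1]$ would realize the connected non-degenerate space $[0,1]$ as a subspace of the totally disconnected space $A$; this contradiction forces every $C_x$ to be finite, which is $3$.

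The pair $3\Leftrightarrow 4$ is routine once one computes $B\bigl(\prod_{x\in X}A_x\bigr)=\prod_{x\in X}B(A_x)=2^{X}$: each MV-chain has Boolean center $\{0,1\}$, so the atoms of $B(A)$ are the indicator elements $e_x$, and $[0,e_x]\cong A_x$. Thus from $3$, every $[0,e_x]$ is a finite MV-chain and $A$ is compact Hausdorff by Lemma \ref{ct2}, giving $4$; conversely, from $4$ Theorem \ref{CH} yields $A\cong\prod_{x\in X}A_x$ with each $A_x\in\{[0,1],\ \text{finite chains}\}$, and the hypothesis that each $[0,e_x]\cong A_x$ be finite rules out the $[0,1]$ factors, yielding $3$. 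I expect the main obstacle to be concentrated entirely in $2\Rightarrow 3$: one must verify that the bare topological-lattice hypothesis already pins $\tau$ down as the interval topology (so that $\neg$ becomes continuous and the Boolean center is accessible as a closed subspace) and that the algebraic decomposition into chains is simultaneously the topological product, so that the connectedness of a would-be $[0,1]$ factor can be played against total disconnectedness.
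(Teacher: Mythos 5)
Your proof is correct, and its skeleton is the paper's own: pass to the Boolean center, get atomicity (Strauss or Johnstone), deduce completeness and complete distributivity, decompose $A$ into complete chains by \cite[Proposition 6.8.1]{C2}, and eliminate $[0,1]$ factors by playing connectedness against zero-dimensionality, with \cite[Theorem VII.1.9]{Jo} pinning the topology down. Three local differences are worth recording. First, you prove $3\Rightarrow 5$ directly by exhibiting $A$ as $\varprojlim_F\prod_{x\in F}A_x$ over finite $F\subseteq X$, where the paper cites \cite[Theorem 2.5]{jbn}; your inverse system is exactly the $\widetilde{A}$ the paper constructs later in Section 3. Second, and most substantively, in $2\Rightarrow 3$ you explicitly justify the continuity of $\neg$ by noting it is a dual order-automorphism, hence a homeomorphism of the interval topology. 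The paper's proof says only ``as in the proof of Theorem \ref{CH},'' but Theorem \ref{CH} assumes a topological MV-algebra, where continuity of $\neg$ is a hypothesis, whereas assertion 2 grants only a topological lattice; the closedness of $B(A)=f^{-1}(\{0\})$ with $f(a)=a\wedge\neg a$ genuinely requires your observation (or some substitute), so your write-up patches a step the paper glosses over. Third, to rule out a factor $A_{x_0}=[0,1]$ the paper detours through the closed MV-subalgebra $I\cup\neg I\cong[0,1]\times \L_2$ (via \cite[Lem. 2.7.5]{DL}) and the prior remark that this algebra is not Stone, while you contradict total disconnectedness directly with the connected non-degenerate slice $\{f\in A: f(x)=0\ \text{for all}\ x\neq x_0\}\cong[0,1]$; the two arguments coincide in substance --- the paper's remark about $[0,1]\times\L_2$ is precisely the connectedness of such a slice --- but yours is shorter and needs no subalgebra machinery. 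Your use of Strauss \cite{DPS} where the paper invokes \cite[Proposition VII.1.16]{Jo} for the atomicity of $B(A)$ is an immaterial swap, and your $3\Leftrightarrow 4$ via the atoms $e_x$ of $B(A)\cong 2^X$ and $[0,e_x]\cong A_x$ matches the paper's argument essentially verbatim.
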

\begin{proof}
$1.\Rightarrow 2.$ This is clear. \\
$2.\Rightarrow 3.$ Suppose that $A$ admits a topology $\tau$ making it a Stone topological lattice. Then as in the proof of Theorem \ref{CH}, we obtain that the Boolean center $B(A)$ of $A$ admits a topology (the subspace topology) that makes it a Stone topological lattice. It follows from \cite[Proposition VII.1.16]{Jo} that $B(A)$ is complete and atomic. Since $A$ is compact, then $A$ is complete. Then, by \cite[Proposition 6.8.1]{C2} $A$ is the direct product of complete MV-chains. But (up to isomorphism), the only complete MV-chains are finite MV-chain and $[0,1]$. Hence, $A$ is isomorphic to $\prod_{x\in X}A_x$, where $A_x=[0,1]$ or $A_x$ is a finite MV-chain. We claim that there is no $x\in X$ such that $A_x=[0,1]$. First, since $\tau$ is a compact and Hausdorff topology on $A$, then by Lemma \ref{ct2} the topology of $\tau$ is the natural product topology, which is the same as the interval topology. Now, if there were an $x_0\in X$ such that $A_{x_0}=[0,1]$, then $A$ would contain a closed MV-subalgebra isomorphic to $[0,1]\times \L_2$. Indeed, let $I=\{f\in A:f(x)=0, \text{for all}\; x\ne x_0\}$, then $I$ is clearly a closed ideal of $A$, and $S:=I\cup \neg I$ is the MV-subalgebra of $A$ generated by $I$ (see e.g., \cite[Lem. 2.7.5]{DL}). Finally, $S$ is clearly closed ($\neg$ is closed map) and is isomorphic to $[0,1]\times \L_2$. As closed MV-subalgebra of a Stone MV-algebra, $S$ would be a Stone MV-algebra, which would imply that $[0,1]\times \L_2$ is a Stone MV-algebra. This would contradict the comments preceding the theorem. Thus, each $A_x$ is a finite MV-chain and  $A$ is isomorphic to $\prod_{x\in X}A_x$.\\
$3.\Rightarrow 4.$ Suppose $A$ is isomorphic to $\prod_{x\in X}A_x$, where each $A_x$ is a finite MV-chain. Then equipping each $A_x$ with the discrete topology, and considering the product topology on $A$, $A$ is clearly compact Hausdorff topological MV-algebra. In addition $B(A)\cong 2^X$ and it follows that for every atom $a$ of $B(A)$, $[0,a]\cong A_x$, which is a finite MV-chain.\\
$4.\Rightarrow 3.$ Suppose that $A$ admits a topology making it a compact Hausdorff topological MV-algebra and $[0,a]$ is a finite MV-chain for all atoms $a$ of $B(A)$. By Theorem \ref{CH}, $A$ is isomorphic to $\prod_{x\in X}A_x$, where $A_x=[0,1]$ or $A_x$ is a finite MV-chain. For each $x\in X$, consider the element $\alpha_x\in A$ defined by $\alpha_x(t)=0$ if $t\ne x$ and $\alpha_x(x)=1$. Then for each $x\in X$, $\alpha_x\in B(A)$ and $[0,\alpha_x]\cong A_x$. Therefore, $A_x$ is a finite MV-chain, for all $x\in X$.\\
$3.\Leftrightarrow 5.$  This is part of \cite[Theorem 2.5]{jbn}.  Finally, profinite algebras are always Stone algebras, hence $5.\Rightarrow 1.$
\end{proof}
Note that if $A$ is a profinite MV-algebra, then $A\cong \prod_{x\in X}[0,x]$, where $X$ is the set of atoms of $B(A)$.
\section{Strongly complete MV-algebras}
An MV-algebra will be called \textit{strongly complete}\footnote{This terminology is borrowed from the well-known theory of profinite groups.} if it is isomorphic to its profinite completion. We start by recalling the construction of the profinite completion for MV-algebras. Given an MV-algebra $A$ and an ideal $I$ of $A$, the relation $\equiv_I$ defined on $A$ by $x\equiv_I y$ if and only if $\neg(\neg x\oplus y)\oplus \neg(x\oplus \neg y)\in I$, is a congruence on $A$. The quotient MV-algebra induced by this congruence is denoted by $A/I$, and the congruence class of $a\in A$ denoted by $[a]_I$. Let $A$ be an MV-algebra and let $\text{id}_f(A)$ be the set of all ideals $I$ of $A$ such that $A/I$ is finite. For every $I, J\in \text{id}_f(A)$ such that $I\subseteq J$, let $\phi_{JI}:A/I\to A/J$ be the natural homomorphism, i.e., $\phi_{JI}([a]_I)=[a]_J$ for all $a\in A$. Observe that $(\text{id}_f(A), \supseteq)$ is a directed set, and $\left\{\text{id}_f(A), \{A/I\},\{\phi_{JI}\}\right\}$ is an inverse system of MV-algebras. The inverse limit of this inverse system is called the profinite completion of the MV-algebra $A$, and commonly denoted by $\widehat{A}$. The following description of $\widehat{A}$ is also well known:
$$\widehat{A}\cong \left\{\alpha\in \prod_{I\in \text{id}_f(A)}A/I: \phi_{JI}(\alpha(I))=\alpha(J) \; \text{whenever}\; I \subseteq J\right\} $$
In the sequel, the set of prime ideals of $A$ will be denoted by $\text{Spec}(A)$ and is endowed with the Zariski's topology. The set $\text{Max}(A)$ of maximal ideals of $A$ inherits the subspace topology of $\text{Spec}(A)$.\\
We have the following characterization of $\text{id}_f(A)$.
\begin{prop}\label{frank}
Let $A$ be an MV-algebra and $I$ be any ideal of $A$. Then $I\in \text{id}_f(A)$ if and only if there exist $M_1, M_2,\ldots, M_r\in \text{Max}(A)\cap \text{id}_f(A)$ such that $I=M_1\cap M_2\cap \ldots \cap M_r$.
\end{prop}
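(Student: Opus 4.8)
The plan is to prove the two implications separately, using the structure theory of finite MV-algebras together with the standard correspondence between the ideals of $A/I$ and the ideals of $A$ containing $I$.

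For the ``if'' direction, suppose $I=M_1\cap\cdots\cap M_r$ with each $M_i\in\text{Max}(A)\cap\text{id}_f(A)$. I would consider the MV-homomorphism $\varphi:A\to\prod_{i=1}^r A/M_i$ given by $\varphi(a)=([a]_{M_1},\ldots,[a]_{M_r})$. Its kernel is $\{a\in A: [a]_{M_i}=[0]_{M_i}\text{ for all }i\}=\{a\in A:a\in M_i\text{ for all }i\}=\bigcap_{i=1}^r M_i=I$, so $A/I$ embeds in $\prod_{i=1}^r A/M_i$. Since each $A/M_i$ is finite by hypothesis, the product is finite, whence $A/I$ is finite and $I\in\text{id}_f(A)$.

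For the ``only if'' direction, suppose $A/I$ is finite. By the structure theorem for finite MV-algebras, $A/I\cong\prod_{j=1}^r \L_{n_j}$ for finitely many finite \L ukasiewicz chains $\L_{n_j}$. In such a product the maximal ideals are exactly the coordinate kernels $\bar M_j=\{x:x_j=0\}$ ($j=1,\dots,r$); each quotient $(A/I)/\bar M_j\cong\L_{n_j}$ is finite (and simple), and their intersection is the zero ideal $\{[0]_I\}$ of $A/I$. Writing $\pi:A\to A/I$ for the canonical surjection, I would set $M_j:=\pi^{-1}(\bar M_j)$. By the correspondence theorem each $M_j$ is a maximal ideal of $A$ containing $I$, with $A/M_j\cong(A/I)/\bar M_j$ finite, so $M_j\in\text{Max}(A)\cap\text{id}_f(A)$. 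Finally, since the congruence class of $0$ under $\equiv_I$ is precisely $I$, one has $\bigcap_{j=1}^r M_j=\pi^{-1}\bigl(\bigcap_{j=1}^r\bar M_j\bigr)=\pi^{-1}(\{[0]_I\})=I$, as required.

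The routine parts are the kernel computation and the fact that preimage commutes with intersection; the genuine input on which everything rests is the decomposition of the finite MV-algebra $A/I$ into finite chains, which simultaneously pins down the (finitely many) maximal ideals of $A/I$, exhibits $\{[0]_I\}$ as their intersection, and guarantees that each quotient $(A/I)/\bar M_j$ is finite. Accordingly, I expect the main point requiring care to be the verification that the pulled-back ideals $M_j$ are maximal of finite rank and that their intersection is exactly $I$, both of which hinge on the order-preserving bijection between ideals of $A/I$ and ideals of $A$ containing $I$.
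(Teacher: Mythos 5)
Your proposal is correct and matches the paper's proof in essence: the ``if'' direction via the diagonal homomorphism $a\mapsto([a]_{M_i})_i$ with kernel $I$ is identical, and in the ``only if'' direction your pulled-back ideals $M_j=\pi^{-1}(\bar M_j)$ are exactly the paper's $M_k=\ker(q_k\circ\varphi\circ p_I)$, both resting on the decomposition $A/I\cong\prod_{j=1}^r \L_{n_j}$ and the simplicity of the chains $\L_{n_j}$. The only cosmetic difference is that you route the argument through the correspondence theorem and the identity $\pi^{-1}(\{[0]_I\})=I$, where the paper simply observes $I=M_1\cap\cdots\cap M_r$ directly from the kernel description.
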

\begin{proof}
Suppose that $I$ is an ideal of $A$, with $A/I$ finite. Then, by \cite[Proposition 3.6.5]{C2}, there is an isomorphism $\varphi:A/I\to \prod_{i=1}^r\L_{n_i}$, for some integers $n_1, n_2, \ldots, n_r\geq 2$. For each $k=1, 2, \ldots, r$, let $M_k=\ker (q_k\circ \varphi\circ p_I)$, where $q_k$ is the natural projection $\prod_{i=1}^r\L_{n_i}\to \L_k$ and $p_I:A\to A/I$ is the canonical projection. Then $M_k$ is a maximal ideal of $A$ since $A/M_k\cong \L_k$, which is simple. In addition, it is clear that $I=M_1\cap M_2\cap \ldots \cap M_r$.\\
Conversely, suppose there exist $M_1, M_2,\ldots, M_r\in \text{Max}(A)\cap \text{id}_f(A)$ such that $I=M_1\cap M_2\cap \ldots \cap M_r$. Consider the homomorphism $\varphi:A\to \prod_{i=1}^rA/M_i$ defined by $\varphi(a)=([a]_{M_i})_i$. Then, $\ker \varphi=I$ and $A/I$ is isomorphic to a sub-MV-algebra of $\prod_{i=1}^rA/M_i$. Hence, $I\in \text{id}_f(A)$.
\end{proof}
Recall \cite{CM} that a maximal ideal $M$ of an MV-algebra $A$ is said to have a finite rank $n$, for some integer $n\geq 2$ if $A/M\cong \L_n$, otherwise one says that $M$ has infinite rank. One should observe that every maximal ideal of a Boolean algebra has finite rank. An MV-algebra $A$ is called regular \cite{BNS} if for every prime ideal $N$ of its Boolean center $B(A)$, the ideal of $A$ generated by $N$ is a prime ideal of $A$. It is known \cite[Proposition 26]{BNS} that if $A$ is a regular MV-algebra, then the map $P\mapsto P\cap B(A)$ defines a homeomorphism from $\text{Max}(A)$ onto $\text{Max}(B(A))=\text{Spec}(B(A))$.
\begin{thm}\label{main3}
Let $A$ be a regular MV-algebra in which every maximal ideal has finite rank. 
\begin{itemize}
\item[1.] Then the map $\Psi: \text{id}_f(A)\to \text{id}_f(B(A))$ defined by $\Psi(I)=I\cap B(A)$ is an inclusion preserving and reversing bijection.
\item[2.] For every $I\in \text{id}_f(A)$, the map $\Theta_I:B(A)/\Psi(I)\to B(A/I)$ defined by $\Theta_I([a])=[a]_I$ is an isomorphism.
\item[3.] For every $I\subseteq J$ in $\text{id}_f(A)$, if $\phi_{JI}:A/I\to A/J$ is the transition homomorphism, the following diagram is commutative.
$$
\begin{CD}
B(A)/\Psi(I) @>\Theta_I>> B(A/I)\\
@V\phi_{\Psi(J)\Psi(I)}VV @VVB(\phi_{JI})V\\
B(A)/\Psi(J) @>\Theta_J>> B(A/J)
\end{CD}
$$

\end{itemize}
\end{thm}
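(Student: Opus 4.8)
The plan is to run everything through the homeomorphism $M\mapsto M\cap B(A)$ between $\Max(A)$ and $\Max(B(A))=\Spec(B(A))$ supplied by \cite[Proposition 26]{BNS}, combined with Proposition \ref{frank}. Because every maximal ideal of $A$ has finite rank by hypothesis, Proposition \ref{frank} identifies $\text{id}_f(A)$ with the collection of finite intersections $M_1\cap\cdots\cap M_r$ of maximal ideals of $A$; the identical description holds in the Boolean algebra $B(A)$, each of whose maximal ideals automatically has finite rank (residue $\L_2$). Before treating the three parts I would record two facts valid in any MV-algebra: if a prime ideal contains a finite intersection of ideals it contains one of them, so the maximal ideals lying over $M_1\cap\cdots\cap M_r$ are exactly $M_1,\dots,M_r$; and consequently every $I\in\text{id}_f(A)$ is the intersection of the finitely many maximal ideals above it. The same holds verbatim in $B(A)$.

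For part 1, writing $I=M_1\cap\cdots\cap M_r$ with the $M_i$ distinct yields $\Psi(I)=\bigcap_{i}(M_i\cap B(A))$, an intersection of the distinct maximal ideals $M_i\cap B(A)$ of $B(A)$, so $\Psi$ maps into $\text{id}_f(B(A))$ and is visibly inclusion-preserving. For surjectivity I would take $J\in\text{id}_f(B(A))$, write $J=N_1\cap\cdots\cap N_s$, pull each $N_k$ back to the unique $M_k\in\Max(A)$ with $M_k\cap B(A)=N_k$, and check $\Psi(M_1\cap\cdots\cap M_s)=J$. Injectivity and inclusion-reflection then follow from the recovery procedure: from $\Psi(I)$ one reads off its maximal overideals in $B(A)$, transports them through the bijection to the maximal overideals of $I$ in $A$, and reconstructs $I$ as their intersection; hence $\Psi(I)\subseteq\Psi(J)$ forces the overideals of $J$ to sit among those of $I$, giving $I\subseteq J$. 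Thus $\Psi$ is a bijection that both preserves and reflects inclusion.

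The core of the theorem is part 2. For $a,b\in B(A)$ the MV-distance $\neg(\neg a\oplus b)\oplus\neg(a\oplus\neg b)$ reduces to the Boolean symmetric difference, which is again a Boolean element; therefore $a\equiv_I b$ holds iff this element lies in $I\cap B(A)=\Psi(I)$, so $\Theta_I$ is a well-defined injective MV-homomorphism, and $a\wedge\neg a=0$ gives $[a]_I\wedge\neg[a]_I=[0]_I$, placing its image inside $B(A/I)$. It remains to prove $\Theta_I$ surjective, i.e. that every idempotent of $A/I$ lifts to a Boolean element of $A$. Here I would use $A/I\cong\prod_{i=1}^r A/M_i\cong\prod_{i=1}^r\L_{n_i}$ from Proposition \ref{frank}, so that $B(A/I)\cong 2^r$, together with the Boolean Chinese remainder theorem applied to $\Psi(I)=\bigcap_i(M_i\cap B(A))$, which gives $B(A)/\Psi(I)\cong 2^r$ as well; an injective homomorphism between finite Boolean algebras of equal cardinality $2^r$ is bijective. (Equivalently, one lifts each atom $e_i$ by producing $a_i\in B(A)$ with $a_i\notin M_i\cap B(A)$ and $a_i\in M_j\cap B(A)$ for $j\ne i$.) I expect this surjectivity to be the main obstacle, since it is exactly where regularity---making the two counts of maximal ideals agree---and the finite-rank hypothesis---making $A/I$ finite with $B(A/I)\cong 2^r$---are genuinely used.

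Finally part 3 is a straightforward diagram chase. For $a\in B(A)$, going down then right sends $[a]\in B(A)/\Psi(I)$ first to its class in $B(A)/\Psi(J)$ under $\phi_{\Psi(J)\Psi(I)}$ and then to $[a]_J$ under $\Theta_J$, while going right then down sends $[a]$ to $[a]_I$ and then to $\phi_{JI}([a]_I)=[a]_J$; since both composites equal $[a]_J$, the square commutes.
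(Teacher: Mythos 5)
Your proposal is correct and follows essentially the same route as the paper's proof: part 1 via Proposition~\ref{frank} together with the regularity bijection $M\mapsto M\cap B(A)$ of \cite[Proposition 26]{BNS} (your overideal-recovery argument for inclusion reflection is a harmless variant of the paper's trick of noting $\Psi(I\cap J)=\Psi(I)$ and invoking injectivity to get $I\cap J=I$), part 2 via the observation that $a\mapsto[a]_I$ restricted to $B(A)$ has kernel exactly $\Psi(I)$, and part 3 by a direct diagram chase. If anything, your part 2 is more complete than the paper's: the paper deduces surjectivity of $\Theta_I$ merely from both quotients being finite Boolean algebras, which alone does not suffice, whereas your cardinality count $B(A)/\Psi(I)\cong 2^r\cong B(A/I)$ (the numbers of maximal ideals over $I$ and over $\Psi(I)$ agree by regularity), or equivalently your atom-lifting argument, supplies precisely the justification that terse step needs.
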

\begin{proof} 1. Since every maximal ideal of $A$ (resp. $B(A)$) has finite rank, it follows from Proposition \ref{frank} elements of $\text{id}_f(A)$ (resp. $\text{id}_f(B(A))$) are simply finite intersections of maximal ideals of $A$ (resp. $B(A)$). Therefore, from the observation before the Proposition, we obtain that $\Psi$ is well-defined. To prove that $\Psi$ is bijective, we construct its inverse. Indeed, consider $\Phi: \text{id}_f(B(A))\to \text{id}_f(A)$ defined as follows. Let $V\in \text{id}_f(A)$, then $V=V_1\cap\ldots \cap V_s$, where each $S_i$ is a maximal ideal of $B(A)$. By \cite[Proposition 26]{BNS}, there exists a unique set of maximal ideals $M_1, \ldots, M_s$ of $A$ such that $V_i=M_i\cap B(A)$ for each $i=1, \ldots, s$. Consider $I=M_1\cap\ldots \cap M_s$, which is in $\text{id}_f(A)$. It is clear that $\Psi$ and $\Phi$ are inverses of each other. Clearly $\Psi$ preserves the inclusion. For the inclusion reversing, suppose that $I\cap B(A)\subseteq J\cap B(A)$, then $I\cap J\cap B(A)=I\cap B(A)$. But, since $\Psi$ is one-to-one, then $I\cap J=I$ and $I\subseteq J$.\\
2. It is clear that the map $a\mapsto [a]_I$ is a homomorphism from $B(A)$ to $B(A/I)$, whose kernel is $\Psi(I)$. Therefore, $\Theta_I$ is an injective homomorphism. But, since $B(A)/\Psi(I)$ and $B(A/I)$ are finite Boolean algebras, then $\Theta_I$ is an isomorphism as claimed.\\
3. This follows from the definitions of the various homomorphisms.
\end{proof}
It is worth pointing out that the class of MV-algebras covered by Theorem \ref{main3} is quite large. Indeed, we have the following well-known two subclasses:
\begin{itemize}
\item[(a)] Let $B$ be a profinite Boolean algebra. Then $B$ is a regular MV-algebra in which every maximal ideal has rank $2$. This is special case of Proposition \ref{bounded} below.
\item[(b)] Every non-simple MV-chain of finite rank (see \cite[Section 8.3]{C2}) is a regular MV-algebra in which every maximal ideal has finite rank. In fact, any such MV-chain $C$ has a unique maximal ideal whose rank is the same as that of $C$.
\end{itemize}
\begin{prop}\label{Boo}
Let $A$ be a regular MV-algebra in which every maximal ideal has finite rank. Then, $$B(\widehat{A})\cong \widehat{B(A)}$$
\end{prop}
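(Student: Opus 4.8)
The plan is to reduce everything to the three parts of Theorem~\ref{main3} together with one soft observation: that taking the Boolean center commutes with the inverse limits defining the two profinite completions. Throughout I would work with the concrete descriptions
$$\widehat{A}\cong\varprojlim_{I\in\text{id}_f(A)}A/I\qquad\text{and}\qquad\widehat{B(A)}\cong\varprojlim_{V\in\text{id}_f(B(A))}B(A)/V,$$
realised as MV-subalgebras of the corresponding products.

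First I would establish the key reduction $B(\widehat{A})\cong\varprojlim_{I\in\text{id}_f(A)}B(A/I)$. The point is that the Boolean center is cut out by a single MV-identity, $B(A)=\{a:a\oplus a=a\}$, and that the MV-operations on $\widehat{A}\subseteq\prod_I A/I$ are computed coordinatewise. Hence an element $\alpha$ of $\widehat{A}$ lies in $B(\widehat{A})$ if and only if $\alpha(I)\in B(A/I)$ for every $I$. Because homomorphisms preserve idempotents, each transition map $\phi_{JI}$ restricts to a homomorphism $B(\phi_{JI}):B(A/I)\to B(A/J)$, and these restricted maps form an inverse system whose limit is exactly $B(\widehat{A})$. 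This step is where the actual verification lives, and I expect it to be the main (though not deep) obstacle.

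Next I would transport this system to one built from $B(A)$ itself, using parts~2 and~3 of Theorem~\ref{main3}. Part~2 says that each $\Theta_I:B(A)/\Psi(I)\to B(A/I)$ is an isomorphism, and the commutative square of part~3 says precisely that the family $\{\Theta_I\}$ intertwines the transition maps, $B(\phi_{JI})\circ\Theta_I=\Theta_J\circ\phi_{\Psi(J)\Psi(I)}$. Thus $\{\Theta_I\}$ is an isomorphism of inverse systems indexed by $(\text{id}_f(A),\supseteq)$, and passing to inverse limits yields $\varprojlim_I B(A/I)\cong\varprojlim_I B(A)/\Psi(I)$.

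Finally I would re-index. By part~1 of Theorem~\ref{main3}, $\Psi$ is an order isomorphism of $(\text{id}_f(A),\subseteq)$ onto $(\text{id}_f(B(A)),\subseteq)$, so $\{B(A)/\Psi(I)\}_{I\in\text{id}_f(A)}$ is merely a bijective re-indexing of the system $\{B(A)/V\}_{V\in\text{id}_f(B(A))}$ whose limit is $\widehat{B(A)}$; hence the two limits agree. Composing the three isomorphisms gives
$$B(\widehat{A})\cong\varprojlim_{I} B(A/I)\cong\varprojlim_{I} B(A)/\Psi(I)\cong\widehat{B(A)},$$
as desired. Every step beyond the first is a purely formal consequence of Theorem~\ref{main3}, so the crux is genuinely the coordinatewise description of $B(\widehat{A})$.
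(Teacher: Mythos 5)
Your proposal is correct and follows essentially the same route as the paper's own proof, which consists of the same chain of isomorphisms written in the opposite direction: starting from $\widehat{B(A)}$, re-indexing via the bijection $\Psi$ of Theorem~\ref{main3}(1), transporting along the isomorphisms $\Theta_I$ using the commutative squares of parts~2--3, and then commuting $B$ past the product and the compatibility conditions to land in $B(\widehat{A})$. Your identification of the coordinatewise description of $B(\widehat{A})$ as the one genuinely non-formal step is accurate --- it corresponds exactly to the last two isomorphisms in the paper's displayed chain.
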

\begin{proof}We have the following sequence of isomorphisms. The first one is from the standard description of the profinite completion. The second follows from Theorem \ref{main3}(1). The third follows from Theorem \ref{main3}(2-3) and the remaining isomorphisms follow from the definition of the Boolean center.
$$
\begin{aligned}
\widehat{B(A)}&\cong \left\{\alpha\in \prod_{V\in \text{id}_f(B(A))}B(A)/V: \phi_{UV}(\alpha(V))=\alpha(U) \; \text{whenever}\; V \subseteq U\right\}\\
&\cong \left\{\alpha\in \prod_{\Psi(I)\in \text{id}_f(B(A))}B(A)/\Psi(I): \phi_{\Psi(J)\Psi(I)}(\alpha(\Psi(I)))=\alpha(\Psi(J)) \; \text{whenever}\; I \subseteq J\right\}\\
&\cong \left\{\alpha\in \prod_{I\in \text{id}_f(A)}B(A/I): \phi_{JI}(\alpha(I))=\alpha(J) \; \text{whenever}\; I \subseteq J\right\}
\end{aligned}
$$
$$
\begin{aligned}
&\cong \left\{\alpha\in B\left(\prod_{I\in \text{id}_f(A)}A/I\right): \phi_{JI}(\alpha(I))=\alpha(J) \; \text{whenever}\; I \subseteq J\right\}\\
&\cong B\left( \left\{\alpha\in \prod_{I\in \text{id}_f(A)}A/I: \phi_{JI}(\alpha(I))=\alpha(J) \; \text{whenever}\; I \subseteq J\right\}\right)\\
&\cong  B(\widehat{A})
\end{aligned}
$$
\end{proof}
\begin{cor}\label{finiterank-str}
Let $A$ be an MV-algebra in which every maximal ideal has finite rank. Then $A$ is strongly complete if and only if $A$ is finite.
\end{cor}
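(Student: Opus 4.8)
The plan is to treat the two implications separately, with the forward direction (strong completeness $\Rightarrow$ finiteness) carrying all the content. The converse is immediate: if $A$ is finite then $A/\{0\}=A$ is finite, so $\{0\}\in\text{id}_f(A)$, and since $\{0\}$ is contained in every ideal it is the greatest element of the directed set $(\text{id}_f(A),\supseteq)$. Using the explicit description of $\widehat{A}$ recalled above, every compatible family $\alpha$ is then determined by its single value $\alpha(\{0\})\in A/\{0\}=A$ via $\alpha(I)=\phi_{I\{0\}}(\alpha(\{0\}))$, so the canonical map $a\mapsto([a]_I)_I$ is an isomorphism $A\cong\widehat{A}$ and $A$ is strongly complete.

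For the forward direction, suppose $A$ is strongly complete with every maximal ideal of finite rank. Since $\widehat{A}$ is by construction an inverse limit of finite MV-algebras it is profinite, hence so is $A\cong\widehat{A}$; by the characterization of profinite MV-algebras obtained earlier, together with the note following it, $A\cong\prod_{x\in X}[0,x]$ where $X$ is the set of atoms of $B(A)$ and each $[0,x]$ is a finite MV-chain. In particular $A$ is a product of MV-chains, and I would verify that any such product is regular: its Boolean center is $\mathcal{P}(X)$, every prime ideal of $\mathcal{P}(X)$ is of the form $N_{\mathcal{U}}=\{b:\mathrm{supp}(b)\notin\mathcal{U}\}$ for an ultrafilter $\mathcal{U}$ on $X$, and the ideal it generates in $A$ equals $\{a\in A:\mathrm{supp}(a)\notin\mathcal{U}\}$, which is prime precisely because $\mathcal{U}$ is a prime filter. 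Thus $A$ is a regular MV-algebra in which every maximal ideal has finite rank, so Proposition \ref{Boo} yields $B(A)\cong B(\widehat{A})\cong\widehat{B(A)}$.

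Consequently the Boolean algebra $B(A)$ is isomorphic to its own profinite completion, and by the known Boolean fact that this forces finiteness \cite{GG}, $B(A)$ is finite. Since $B(A)\cong\mathcal{P}(X)$, the index set $X$ is then finite, so $A\cong\prod_{x\in X}[0,x]$ is a finite product of finite MV-chains and hence finite, completing the argument.

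The step I expect to require the most care is the verification that the profinite algebra $A$ is regular, since Proposition \ref{Boo}, and with it the reduction to the Boolean center, is available only under that hypothesis; the ultrafilter computation sketched above settles it, though one might instead cite the structural description of products of chains in \cite{BNS}. Everything else is routine: the passage from $A\cong\widehat{A}$ to $B(A)\cong\widehat{B(A)}$ is exactly Proposition \ref{Boo}, and the conclusion rests only on the Boolean characterization already quoted in the introduction.
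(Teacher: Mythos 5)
Your proposal is correct and follows essentially the same route as the paper's proof: deduce profiniteness from $A\cong\widehat{A}$, decompose $A$ as a product of finite MV-chains, establish regularity so that Proposition \ref{Boo} applies, and conclude via the Boolean fact that only finite Boolean algebras are strongly complete. The only differences are expository --- you spell out the easy converse and verify regularity by a direct ultrafilter computation where the paper simply cites \cite[Propositions 36, 37]{BNS}, a substitution you yourself flag as interchangeable.
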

\begin{proof}
It is clear that finite MV-algebras are strongly complete. \\
Conversely, suppose that $A$ is strongly complete. Then $\widehat{A}\cong A$. Then, since $\widehat{A}$ is profinite, so is $A$. By \cite[Theorem 2.5]{jbn}, $A\cong \prod_{x\in X}\L_{n_x}$ for some set $X$, and some integers $n_x\geq 2$. Thus, by \cite[Proposition 36, 37]{BNS}, $A$ is regular. Hence, by Proposition \ref{Boo}, $\widehat{B(A)}\cong B(\widehat{A})\cong B(A)$. Hence, the Boolean center $B(A)$ of $A$ is strongly complete. But the only strongly complete Boolean algebras are finite ones. Thus, $B(A)\cong 2^X$ is finite. Therefore, $X$ is finite and $A$ is finite as needed.
\end{proof}
\begin{prop}\label{strong1}
Let $A$ be a strongly complete MV-algebra. Then $A$ is profinite and every maximal ideal of $A$ of finite rank is principal.
\end{prop}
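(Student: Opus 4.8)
The first assertion is immediate: since $A$ is strongly complete the canonical map $\eta_A\colon A\to\widehat{A}$ is an isomorphism, and $\widehat{A}$ is an inverse limit of finite MV-algebras, hence profinite; thus $A$ is profinite. By \cite[Theorem 2.5]{jbn} we may therefore write $A\cong\prod_{x\in X}\L_{n_x}$, with $X$ the set of atoms of $B(A)$ and each $n_x\geq 2$, and by \cite[Propositions 36, 37]{BNS} this $A$ is regular with $B(A)\cong 2^X$. The plan for the second assertion is to argue by contradiction: assuming some finite-rank maximal ideal $M$ is \emph{not} principal, I will manufacture an infinite power of a single finite chain that is forced to be strongly complete, contradicting Corollary \ref{finiterank-str}.

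The first step is to locate an infinite fibre of constant rank. Since $A$ is regular, $M\mapsto M\cap B(A)$ identifies $\text{Max}(A)$ with $\text{Max}(B(A))=\text{Spec}(2^X)$, i.e.\ with the ultrafilters on $X$; under this identification a maximal ideal corresponds to the ultrafilter $\mathcal{U}$ for which $M=\{f\in A:\{x:f(x)=0\}\in\mathcal{U}\}$, and then $A/M$ is the ultraproduct $\prod_{x}\L_{n_x}/\mathcal{U}$. A direct check shows that $M$ is principal precisely when $\mathcal{U}$ is principal, say fixed at a point $x_0\in X$, in which case $M=\langle\neg\alpha_{x_0}\rangle$ and $A/M\cong\L_{n_{x_0}}$; for a free $\mathcal{U}$ one exhibits, for any candidate generator $g\in M$, an element of $M$ lying below no multiple of $g$, so $M$ is not principal. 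Hence our non-principal finite-rank $M$, of rank $n$ say, corresponds to a free ultrafilter, and by the \L o\'s ultraproduct theorem the isomorphism $A/M\cong\L_n$ forces $X_n:=\{x\in X:n_x=n\}\in\mathcal{U}$; since $\mathcal{U}$ is free, $X_n$ is infinite.

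The second step transfers strong completeness to $C:=\L_n^{X_n}$. Restriction of functions to the coordinates in $X_n$ is a surjective homomorphism $r\colon A\to C$, and by naturality of the canonical maps the square relating $\eta_A$, $\eta_C$, $r$ and the induced $\widehat{r}\colon\widehat{A}\to\widehat{C}$ commutes. As $r$ is onto we have $\eta_C(C)\subseteq\operatorname{im}\widehat{r}$, and since $\widehat{A}$ is compact while $\eta_C(C)$ is dense in $\widehat{C}$, the closed set $\operatorname{im}\widehat{r}$ must be all of $\widehat{C}$; thus $\widehat{r}$ is surjective. Because $\eta_A$ is an isomorphism, $\eta_C\circ r=\widehat{r}\circ\eta_A$ is onto, whence $\eta_C$ is onto; as $C$ is a product of the simple algebra $\L_n$ it is residually finite and $\eta_C$ is injective, so $\eta_C$ is an isomorphism and $C$ is strongly complete. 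But every maximal ideal of $C=\L_n^{X_n}$ has finite rank, since any ultrapower of the finite chain $\L_n$ is again $\L_n$; hence Corollary \ref{finiterank-str} forces $C$, and therefore $X_n$, to be finite, contradicting the previous paragraph. Consequently every finite-rank maximal ideal of $A$ is principal.

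The crux lies in the two reductions. The harder conceptual point is the ultrafilter computation showing that a non-principal finite-rank maximal ideal must sit on an infinite, constant-rank fibre $X_n$; the harder technical point is transporting strong completeness along the surjection $A\to\L_n^{X_n}$, where surjectivity of $\widehat{r}$ — and hence the whole transfer — rests on compactness of the profinite completions together with density of the canonical image. Once both are in place, Corollary \ref{finiterank-str} supplies the contradiction with no further work.
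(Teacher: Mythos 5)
Your route is genuinely different from the paper's, which disposes of the second assertion in a few lines: transporting $M$ into $\widehat{A}$, it becomes $A_M=\ker p_M$, where $p_M\colon\widehat{A}\to A/M$ is the natural projection; since $A/M$ is finite, $A_M$ is clopen in the product topology, and \cite[Lemma 3.1]{jbn} says clopen maximal ideals of profinite MV-algebras are principal. Your contradiction argument through $\L_n^{X_n}$ and Corollary \ref{finiterank-str} is an interesting alternative, but as written it has a genuine error exactly at what you call the harder conceptual point. For $A=\prod_{x\in X}\L_{n_x}$, the maximal ideal attached to an ultrafilter $\mathcal U$ is \emph{not} $\{f\in A:\{x:f(x)=0\}\in\mathcal U\}$, and $A/M$ is \emph{not} the ultraproduct $\prod_x\L_{n_x}/\mathcal U$. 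That set is only the ultraproduct kernel $J_{\mathcal U}$; when the ranks are unbounded along $\mathcal U$ the ultraproduct is a nonstandard chain with infinitesimals, so $J_{\mathcal U}$ is prime but not maximal, and the maximal ideal with the same trace on $B(A)$ is strictly larger, namely $M_{\mathcal U}=\{f:\lim_{\mathcal U}f=0\}$ as in Section 4 of the paper. The paper's own example ($A=\prod_n\L_{n+1}$ with $\mathcal U$ free) refutes your identification: there $A/M_{\mathcal U}\cong[0,1]$ while $J_{\mathcal U}$ is not maximal. So \L o\'s cannot be applied to $A/M$ directly. Your conclusion $X_n\in\mathcal U$ is nevertheless true, but needs repair: if no $X_m\in\mathcal U$, then $\{x:n_x>B\}\in\mathcal U$ for every bound $B$, and approximating any $t\in[0,1]$ coordinatewise shows $\Psi_{\mathcal U}$ maps onto $[0,1]$, so $M_{\mathcal U}$ would have infinite rank; hence some $X_m\in\mathcal U$, the ultraproduct is then the simple algebra $\L_m$, forcing $J_{\mathcal U}=M_{\mathcal U}$ and $m=n$, and freeness of $\mathcal U$ makes $X_n$ infinite.

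A second gap is definitional: you open by asserting that the canonical map $\eta_A\colon A\to\widehat{A}$ is an isomorphism, whereas the paper defines strong completeness as the existence of \emph{some} isomorphism $A\cong\widehat{A}$. Your transfer step really uses surjectivity of $\eta_A$: without it, $\eta_C\circ r=\widehat{r}\circ\eta_A$ tells you nothing about $\eta_C$, since an abstract isomorphism does not fit into the naturality square. (To be fair, the paper's own proof quietly makes the same canonical identification when it equates $M$, viewed in $\widehat{A}$, with $\ker p_M$, so this gap is shared; still, you should either adopt the canonical-map definition explicitly or justify the strengthening.) The rest of your transfer is sound: $\widehat{r}$ is well defined because $r^{-1}(J)\in\text{id}_f(A)$ for every $J\in\text{id}_f(C)$, its image is closed by compactness and contains the dense subalgebra $\eta_C(C)$, residual finiteness of $C=\L_n^{X_n}$ gives injectivity of $\eta_C$, Proposition \ref{bounded} gives that all maximal ideals of $C$ have finite rank, and Corollary \ref{finiterank-str} --- proved before Proposition \ref{strong1} and independently of it, so there is no circularity --- delivers the contradiction. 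With those two points repaired, your argument stands, at the cost of considerably more machinery than the paper's clopen-ideal argument; what it buys is an explicit structural picture (the infinite constant-rank fibre) that the paper's proof leaves implicit.
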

\begin{proof}
Suppose that $A$ is an MV-algebra isomorphic to its profinite completion $\widehat{A}$. Then, $A$ is profinite since $\widehat{A}$ is profinite by definition. Let $M$ be a maximal of $A$ of finite rank $n$. Then, $M$ can be viewed as an ideal of $\widehat{A}$ of finite rank. Since $\widehat{A}=\left\{\alpha\in \prod_{I\in \text{id}_f(A)}A/I: \phi_{JI}(\alpha(I))=\alpha(J) \; \text{whenever}\; I \subseteq J\right\} $, then $M$ (viewed as an ideal of $\widehat{A}$) is equal to $A_M:=\{\alpha\in \widehat{A}: \alpha(M)=M\}=\{\alpha\in \prod_{I\in \text{id}_f(A)}A/I: \alpha(M)=M\}\cap \widehat{A}=\ker p_M$, where $p_M$ is the natural projection $\widehat{A}\to A/M$. But, $A_M$ is a maximal ideal of $\widehat{A}$ since $\widehat{A}/A_M\cong A/M$, which is simple. In addition, $A_M$ is clearly clopen in $\widehat{A}$ under the induced topology of $\prod_{I\in \text{id}_f(A)}A/I$. Hence, by \cite[Lemma 3.1]{jbn} $A_M$ is principal in $\widehat{A}$, and $M$ is principal in $A$.
\end{proof}
Recall \cite[Theorem 2.5]{jbn} that if $A$ is a profinite MV-algebra, then $A\cong \prod_{x\in X}\L_{n_x}$ for some set $X$ and $n_x\geq 2$ are integers. Let $\mathcal{P}_f(X)$ be the set of finite subsets of $X$. For each $S\in \mathcal{P}_f(X)$, let $A_S:=\prod_{x\in S}\L_{n_x}$. Then for $S\subseteq T$ in $\mathcal{P}_f(X)$, there exists a natural homomorphism $\phi_{ST}: A_T\to A_S$ defined by $\phi_{ST}(\alpha)(x)=\alpha(x)$ for all $\alpha\in A_T$ and $x\in S$. It is easily verified that $(\mathcal{P}_f(X), \supseteq)$ is a directed set and $\left\{ \{\mathcal{P}_f(X)\}, \left\{A_S\right\}_S, \{\phi_{ST}\}\right\}$ is an inverse system. Let $$\widetilde{A}:=\varprojlim\left\{ \{\mathcal{P}_f(X)\}, \left\{A_S\right\}_S, \{\phi_{ST}\}\right\}$$
\begin{lem}\label{hat=tilde}
Let $A$ be a profinite MV-algebra such that every maximal ideal of $A$ of finite rank is principal. Then, $$\widehat{A}\cong \widetilde{A}$$
\end{lem}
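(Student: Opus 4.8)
The plan is to prove the isomorphism by matching index sets, reducing the unwieldy system indexed by $\text{id}_f(A)$ to the transparent one indexed by $\mathcal{P}_f(X)$. Since $A$ is profinite we may write $A\cong\prod_{x\in X}\L_{n_x}$ as recalled above; let $\pi_x\colon A\to\L_{n_x}$ be the coordinate projections and put $M_x=\ker\pi_x$. For $S\in\mathcal{P}_f(X)$ set $I_S=\bigcap_{x\in S}M_x$, the kernel of the projection $A\to A_S$, so that $A/I_S\cong A_S$ and $I_S\in\text{id}_f(A)$. It then suffices to show that $S\mapsto I_S$ is a bijection $\mathcal{P}_f(X)\to\text{id}_f(A)$. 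Such a bijection automatically carries the inverse system $\{A_S,\phi_{ST}\}$ isomorphically onto $\{A/I,\phi_{JI}\}$ — it identifies $A_S$ with $A/I_S$, and for $S\subseteq T$ (so that $I_T\subseteq I_S$) it identifies $\phi_{ST}\colon A_T\to A_S$ with $\phi_{I_SI_T}\colon A/I_T\to A/I_S$ — whence $\widehat{A}\cong\widetilde{A}$. Injectivity of $S\mapsto I_S$ is immediate since $S=\{x:I_S\subseteq M_x\}$, so everything reduces to surjectivity: every $I\in\text{id}_f(A)$ has the form $I_S$.

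By Proposition \ref{frank} every $I\in\text{id}_f(A)$ is a finite intersection of finite-rank maximal ideals, so surjectivity follows at once from the crux of the argument: \emph{under the hypothesis, the finite-rank maximal ideals of $A$ are exactly the coordinate ideals $M_x$}. Indeed each $M_x$ is maximal of finite rank since $A/M_x\cong\L_{n_x}$; and once the converse is known, any $I=N_1\cap\cdots\cap N_r\in\text{id}_f(A)$ with the $N_j$ finite-rank maximal equals $\bigcap_{j}M_{x_j}=I_S$ for the finite set $S=\{x_1,\dots,x_r\}$.

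To prove the crux, let $M$ be a finite-rank maximal ideal, of rank $k$, with quotient map $\pi\colon A\to A/M\cong\L_k$. By hypothesis $M$ is principal, say $M=\langle a\rangle=\{y:y\le ma\text{ for some }m\}$. Since a product of chains is regular \cite[Propositions 36, 37]{BNS}, $M\cap B(A)$ is a maximal ideal of $B(A)\cong 2^X$ and so corresponds to an ultrafilter $\mathcal{U}$ on $X$, characterised by $e_U\notin M\iff U\in\mathcal{U}$, where $e_U\in B(A)$ is the idempotent with support $U$. Writing $W=\{x:a(x)>0\}$ for the support of $a$, I would first show that finiteness of the rank forces $W\notin\mathcal{U}$, i.e. $e_W\in M$. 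The point is that finite rank makes $\mathcal{U}$ essentially bounded, so $E:=\{x:n_x=n\}\in\mathcal{U}$ for some fixed $n$; if instead $W\in\mathcal{U}$, then on $E\cap W\in\mathcal{U}$ each value $a(x)$ would be a positive element of $\L_n$, hence $\ge 1/(n-1)$, so $(n-1)a\ge e_{E\cap W}$, and downward closure of $M$ applied to $(n-1)a\in M$ would give $e_{E\cap W}\in M$, i.e. $E\cap W\notin\mathcal{U}$, a contradiction. Thus $e_W\in M=\langle a\rangle$, so $e_W\le ma$ for some $m$; since also $a\le e_W$, this yields $ma=e_W$ and hence $M=\langle e_W\rangle=\{y:y(x)=0\text{ for all }x\notin W\}$. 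Finally $A/M\cong\prod_{x\notin W}\L_{n_x}$ is simple, which forces $X\setminus W$ to be a single point $\{x_0\}$, and therefore $M=M_{x_0}$.

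The main obstacle is the crux, and within it the claim that finite rank forces $W\notin\mathcal{U}$: this is the only place where both the finite-rank structure and the principality hypothesis are used, and it is exactly what excludes the ``nonprincipal ultrafilter'' maximal ideals — the phenomenon responsible, for instance, for an infinite power-set algebra failing to be strongly complete. The remaining ingredients (Proposition \ref{frank}, regularity of products of chains, and the bookkeeping identifying the two inverse systems) are routine.
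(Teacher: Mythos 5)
Your proposal is correct, and its skeleton is the same as the paper's: the paper also defines $\Theta(S)=\bigcap_{x\in S}M_x$ and reduces the lemma to showing that this is an inclusion-preserving bijection $\mathcal{P}_f(X)\to\text{id}_f(A)$, after which the identification of the two inverse systems (your ``bookkeeping'') is dismissed as a simple verification. Where you genuinely diverge is at the crux. The paper does not prove that the finite-rank maximal ideals are exactly the coordinate kernels $M_x$; it gets this by combining Proposition \ref{frank}, the principality hypothesis, and a citation to \cite[Proposition 3.1]{jbn}, which identifies the principal maximal ideals of a profinite MV-algebra with the coordinate kernels (equivalently, with principal ultrafilters, as restated after the homeomorphism $\text{Max}(A)\cong\beta X$ in Section 4). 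You instead prove the needed case of that external result from scratch: pass to the ultrafilter $\mathcal{U}$ determined by $M\cap B(A)$, show the support $W$ of the principal generator satisfies $W\notin\mathcal{U}$, deduce $ma=e_W$ and hence $M=\langle e_W\rangle$, and let simplicity of $A/M\cong\prod_{x\notin W}\L_{n_x}$ force $X\setminus W$ to be a singleton. This buys self-containedness: a reader of your proof needs only \cite[Theorem 2.5]{jbn} for the product representation and \cite{BNS} for regularity, not the structure theory of clopen/principal maximal ideals from \cite{jbn}.

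One step in your argument is asserted rather than proved: that finite rank of $M$ forces some level set $E=\{x:n_x=n\}$ to lie in $\mathcal{U}$. The claim is true, but it requires exactly the density argument the paper runs in Section 4 (compare Proposition \ref{bounded} and the first Example): if no level set lies in $\mathcal{U}$, then since an ultrafilter containing a finite union contains one of its pieces, no set $\{x:n_x\le N\}$ lies in $\mathcal{U}$, so every tail $\{x:n_x>N\}$ does; given $0\le s<t\le 1$, choose $f\in A$ with $f(x)\in[s,t]$ whenever $1/(n_x-1)<t-s$, which puts $\lim_{\mathcal{U}}f\in[s,t]$ and shows $\text{Im}\,\Psi_{\mathcal{U}}$ is dense in $[0,1]$, contradicting $A/M\cong\L_k$. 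With that paragraph inserted, your proof is complete; the rest of your reasoning ($(n-1)a\ge e_{E\cap W}$ and downward closure giving $e_W\in M$, then $a\le e_W\le ma$ giving $ma=e_W$, injectivity of $S\mapsto I_S$, and the commuting squares) checks out and matches what the paper leaves to the reader.
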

\begin{proof}
It is enough to prove that the inverse systems $\left\{\text{id}_f(A), \{A/I\},\{\phi_{JI}\}\right\}$ and $\left\{ \{\mathcal{P}_f(X)\}, \left\{A_S\right\}_S, \{\phi_{ST}\}\right\}$ are isomorphic. Consider the map $\Theta: \mathcal{P}_f(X) \to \text{id}_f(A)$ defined by $\Theta(S)=\cap_{x\in S}M_x$, where $M_x$ is the kernel of the natural projection $p_x:A\to \L_{n_x}$. It follows from Proposition \ref{frank}, the fact every maximal ideal of $A$ of finite rank is principal and \cite[Proposition 3.1]{jbn} that $\Theta$ is a inclusion-preserving bijection. In addition, for all $S\in \mathcal{P}_f(X)$, there is a canonical isomorphism $\varphi_S: A_S\to A/\Theta(S)$. To see that the systems $\left\{\text{id}_f(A), \{A/I\},\{\phi_{JI}\}\right\}$ and $\left\{ \{\mathcal{P}_f(X)\}, \left\{A_S\right\}_S, \{\phi_{ST}\}\right\}$ are isomorphic, one needs to verify that for all $S\subseteq T$, the following diagram is commutative.
$$
\begin{CD}
A_T @>\varphi_T>> A/\Theta(T)\\
@V\phi_{ST}VV @VV\phi_{\Theta(S)\Theta(T)}V\\
A_S @>\varphi_S>> A/\Theta(S)
\end{CD}
$$

But, this is a simple verification.
\end{proof}
\begin{thm}\label{strong2}
Let $A$ be a profinite MV-algebra such that every maximal ideal of $A$ of finite rank is principal. Then $A$ is strongly complete .
\end{thm}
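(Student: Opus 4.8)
The plan is to combine Lemma \ref{hat=tilde} with the elementary fact that a direct product is the inverse limit of its finite subproducts. Since $A$ is profinite, \cite[Theorem 2.5]{jbn} gives an isomorphism $A\cong \prod_{x\in X}\L_{n_x}$ for some set $X$ and integers $n_x\geq 2$, and this is precisely the presentation underlying the definition of $\widetilde{A}$. By Lemma \ref{hat=tilde}, the hypothesis that every maximal ideal of $A$ of finite rank is principal yields $\widehat{A}\cong \widetilde{A}$. Thus it suffices to produce an isomorphism $A\cong \widetilde{A}$, after which the chain $A\cong \widetilde{A}\cong \widehat{A}$ exhibits $A$ as isomorphic to its profinite completion, i.e., strongly complete.

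To identify $A$ with $\widetilde{A}$, I would use the canonical projections $\pi_S:A\to A_S$ sending $a$ to its restriction $(a(x))_{x\in S}$. These are MV-homomorphisms satisfying $\phi_{ST}\circ \pi_T=\pi_S$ for all $S\subseteq T$ in $\mathcal{P}_f(X)$, so by the universal property of the inverse limit they assemble into a homomorphism $\Lambda:A\to \widetilde{A}$ given by $\Lambda(a)=(\pi_S(a))_S$. To see that $\Lambda$ is an isomorphism I would exhibit its inverse directly: given a compatible family $\alpha=(\alpha_S)_S\in \widetilde{A}$, define $a\in \prod_{x\in X}\L_{n_x}$ by $a(x)=\alpha_{\{x\}}$, using the identification $A_{\{x\}}=\L_{n_x}$. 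For every $x\in S$, compatibility gives $\phi_{\{x\}S}(\alpha_S)=\alpha_{\{x\}}$, and under the identification $A_{\{x\}}=\L_{n_x}$ the left-hand side is exactly $\alpha_S(x)$; hence $\alpha_S(x)=a(x)$ for all $x\in S$, so $\pi_S(a)=\alpha_S$ for every $S$ and $\Lambda(a)=\alpha$. This shows $\Lambda$ is surjective, and injectivity is immediate since an element of the product is determined by its single coordinates. Therefore $\Lambda$ is a bijective homomorphism, hence an isomorphism.

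I expect no serious obstacle here: the only real content is the observation that $\prod_{x\in X}\L_{n_x}$ is the inverse limit of its finite subproducts, which is routine once the compatibility relations are written out. The genuine work was already carried out in Lemma \ref{hat=tilde}, where the two hypotheses enter. They are precisely what make that lemma applicable, so that the concrete inverse limit $\widetilde{A}$ indexed by $\mathcal{P}_f(X)$ actually computes the abstract profinite completion $\widehat{A}$ indexed by $\text{id}_f(A)$. With that identification in hand, the present theorem reduces to the trivial self-duality of the product, and the conclusion $A\cong \widehat{A}$ follows at once.
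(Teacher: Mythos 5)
Your proposal is correct and follows essentially the same route as the paper: after invoking Lemma \ref{hat=tilde}, your map $\Lambda(a)=(\pi_S(a))_S$ is exactly the paper's homomorphism $\varphi$, and your surjectivity argument (defining $a(x)=\alpha_{\{x\}}(x)$ under the identification $A_{\{x\}}\cong \L_{n_x}$ and using compatibility via $\phi_{\{x\}S}$) and injectivity via singleton coordinates coincide with the paper's verification. The only cosmetic difference is that you package the construction of $\Lambda$ through the universal property of the inverse limit, whereas the paper writes the same map out explicitly coordinate by coordinate.
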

\begin{proof} Recall that $\widetilde{A}\cong \left\{ f\in \prod_{S\in \mathcal{P}_f(X)}A_S: \phi_{ST}(f(T))=f(S), \; \text{whenever}\; S\subseteq T\right\}$.
Since $\widehat{A}\cong \widetilde{A}$ from Lemma \ref{hat=tilde}, we shall prove that $A$ is isomorphic to $\widetilde{A}$. For $f\in A$, define $\varphi(f)\in \prod_{S\in \mathcal{P}_f(X)}A_S$ by $\varphi(f)(S)(x)=f(x)$ for all $S\in \mathcal{P}_f(X)$ and all $x\in S$. We claim that $\varphi(f)\in \widetilde{A}$. Suppose $S\subseteq T$, and $x\in S$, then $\phi_{ST}(\varphi(f)(T))(x)=\varphi(f)(T)(x)=f(x)=\varphi(f)(S)(x)$. Therefore, $\varphi(f)\in \widetilde{A}$ and $\varphi$ is a well-defined map from $A\to \widetilde{A}$. It is easy to verify that $\varphi(f\oplus g)=\varphi(f)\oplus \varphi(g)$ and $\varphi(\neg f)=\neg \varphi(f)$, and so $\varphi$ is a homomorphism. It remains to prove that $\varphi$ is bijective.\\
\underline{Injectivity:} Let $f, g\in A$ such that $\varphi(f)=\varphi(g)$, then for all $x\in X$, $\varphi(f)(\{x\})(x)=\varphi(g)(\{x\})(x)$. Hence, $f(x)=g(x)$ for all $x\in X$, and $f=g$. Thus, $\varphi$ is injective.\\
\underline{Surjectivity:} Let $\alpha\in \widetilde{A}$, then $\alpha(\{x\})\in A_{\{x\}}$ (=$\prod_{t\in \{x\}}\L_{n_t}$). Define $f\in A$ by $f(x)=\alpha(\{x\})(x)$. Then, for every $S\in\mathcal{P}_f(X)$ and every $x\in S$, $\varphi(f)(S)(x)=f(x)=\alpha(\{x\})(x)=\phi_{\{x\}S}(\alpha(S))(x)=\alpha(S)(x)$. Hence, $\varphi(f)=\alpha$, and $\varphi$ is surjective.\\
Thus, $\varphi$ is an isomorphism as desired.
\end{proof}
Combining Proposition \ref{strong1} and Theorem \ref{strong2}, we obtain a necessary and sufficient condition for the strong completeness of MV-algebras.
\begin{cor}\label{iffstrong}
An MV-algebra $A$ is strongly complete if and only if $A$ is profinite and every finite rank maximal ideal of $A$ is principal.
\end{cor}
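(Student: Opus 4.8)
The plan is to observe that this biconditional is nothing more than the conjunction of the two results just established, so the proof is a matter of invoking them in the two directions. First I would handle the forward implication: assuming that $A$ is strongly complete, Proposition \ref{strong1} gives at once that $A$ is profinite and that every finite-rank maximal ideal of $A$ is principal. Conversely, assuming that $A$ is profinite with every finite-rank maximal ideal principal, Theorem \ref{strong2} yields directly that $A$ is strongly complete. No additional argument is needed at the level of the corollary itself; it is simply the merger of the necessity half (Proposition \ref{strong1}) and the sufficiency half (Theorem \ref{strong2}).

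The substance of the equivalence resides in the two underlying statements rather than in their combination. For the forward direction, the pivotal observation inside Proposition \ref{strong1} is that a finite-rank maximal ideal $M$ of $A$, viewed inside $\widehat{A}$, coincides with the clopen maximal ideal $\ker p_M$, so that the clopen-implies-principal lemma \cite[Lemma 3.1]{jbn} applies and then transports back to $A$ along the isomorphism $A\cong\widehat{A}$. For the converse, the weight is carried by Lemma \ref{hat=tilde}, which replaces the unwieldy inverse system over $\text{id}_f(A)$ by the more tractable system $\widetilde{A}$ indexed by the finite subsets of $X$, together with the explicit coordinatewise map $\varphi:A\to\widetilde{A}$ of Theorem \ref{strong2} whose injectivity and surjectivity are verified directly on single-point coordinates.

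Since both implications are already in hand, no genuine obstacle remains at this stage. The only point deserving a moment's care is confirming that the hypotheses match verbatim: Proposition \ref{strong1} concludes precisely that $A$ is profinite and that every maximal ideal of finite rank is principal, which is exactly the hypothesis required by Theorem \ref{strong2}, so the two results dovetail with no gap between them. I would therefore present the corollary as immediate, citing Proposition \ref{strong1} for necessity and Theorem \ref{strong2} for sufficiency.
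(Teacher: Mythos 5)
Your proposal is correct and matches the paper exactly: the paper likewise obtains the corollary immediately by combining Proposition \ref{strong1} (necessity) with Theorem \ref{strong2} (sufficiency), with no further argument at the corollary level. Your additional remarks on where the real work lies (the clopen-implies-principal step in Proposition \ref{strong1} and the system comparison in Lemma \ref{hat=tilde}) are accurate but not needed for the corollary itself.
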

Since every maximal ideal of a Boolean algebra has rank $2$, and every infinite profinite Boolean algebra (powerset) has non-principal maximal ideals, then it follows from Corollary \ref{iffstrong} that the only strongly complete Boolean algebras are the finite ones. This provides another proof of this well-known result.\par
\section{Examples}
In this section we construct concrete examples and non-examples of strongly complete MV-algebras. The main tool used here is the well-known connection between the maximal spectrum of profinite and the theory of ultrafilters. Recall that if $A$ is a profinite MV-algebra, then $A\cong \prod_{x\in X}\L_{n_x}$ for some set $X$, and some integers $n_x\geq 2$. In addition, by \cite[Corollary 3.4]{jbn}, the set $\eta(A):=\{n_x:x\in X\}$ is uniquely determined by $A$. \par 
In the remainder of this section $A:=\prod_{x\in X}\L_{n_x}$. We shall recall a known description of the maximal spectral space of $A$. It is likely that the presentation given of this description is not new, but for lack of direct reference, and also to increase the readability, we include it.
For $f\in A$ and $\epsilon>0$, $$D(f, \epsilon):=\{x\in X:f(x)<\epsilon$$
It is easy to see that for all $f, g\in A$ and $\epsilon, \delta>0$:
$D(f\oplus g, \min(\epsilon, \delta))\subseteq D(f, \epsilon)\cap D(g, \delta)$
\begin{defi}
Let $f\in A$, $\mathcal{U}$ an ultrafilter of $X$ and $t\in [0,1]$. We say that $f$ converges to $t$ along $\mathcal{U}$ if for every $\epsilon>0$, there exists $S\in \mathcal{U}$ such that $f(S)\subseteq (t-\epsilon, t+\epsilon)$.
\end{defi}
Note that this is equivalent to for every $\epsilon>0$, $\{x\in X: |f(x)-t|<\epsilon\}=f^{-1}(t-\epsilon, t+\epsilon)\in \mathcal{U}$.\\
Since $[0,1]$ is compact and Hausdorff, then every $f\in A$ has a unique limit along any ultrafilter $\mathcal{U}$ of $X$ (see for e.g., \cite[Prop. 1.7]{IGG}). We denote this limit by $$\lim _\mathcal{U}f$$
Note that $f$ converges to $0$ along $\mathcal{U}$ if and only if  for every $\epsilon>0$, $D(f,\epsilon)\in \mathcal{U}$. \\
For every ultrafilter $\mathcal{U}$ of $X$, let $$M_{\mathcal{U}}:=\{f\in A: \lim _\mathcal{U}f=0\}=\{f\in A: D(f,\epsilon)\in \mathcal{U}\; \text{for all}\; \epsilon>0\}$$
For every maximal ideal $M$ of $A$, let $$\mathcal{U}_M:=\{S\subseteq X:D(f,\epsilon)\subseteq S\; \text{for some}\; f\in M\; \text{and}\; \epsilon>0\}$$
\begin{prop}\label{alglimit}
For every ultrafilter $\mathcal{U}$ of $X$, define the map $\Psi_{\mathcal{U}}:A\to [0,1]$ by $\Psi_{\mathcal{U}}(f)=\lim _\mathcal{U}f$. Then $\Psi$ is an MV-homomorphism whose kernel is $M_{\mathcal{U}}$.
\end{prop}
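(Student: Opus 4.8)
The plan is to reduce the proposition to the standard fact that an ultrafilter limit into a compact Hausdorff space commutes with continuous maps; the existence of $\lim_{\mathcal U}f$ for every $f\in A$ has already been recorded, since $[0,1]$ is compact and Hausdorff. Granting that commutation, the homomorphism property of $\Psi_{\mathcal U}$ is immediate from the continuity of the MV-operations of $[0,1]$, and the identification of the kernel is a tautology.

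Concretely, first I would prove the auxiliary statement: for any continuous $h:[0,1]^k\to[0,1]$ and any $f_1,\dots,f_k\in A$,
\[
\lim_{\mathcal U} h(f_1,\dots,f_k)=h\bigl(\lim_{\mathcal U}f_1,\dots,\lim_{\mathcal U}f_k\bigr).
\]
Setting $t_i=\lim_{\mathcal U}f_i$ and fixing $\varepsilon>0$, continuity of $h$ at $(t_1,\dots,t_k)$ supplies a $\delta>0$ for which $|f_i(x)-t_i|<\delta$ for every $i$ forces $\bigl|h(f_1(x),\dots,f_k(x))-h(t_1,\dots,t_k)\bigr|<\varepsilon$. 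By definition of the limit each set $\{x:|f_i(x)-t_i|<\delta\}$ belongs to $\mathcal U$, and since $\mathcal U$ is a filter so does their finite intersection; hence $\{x:|h(f_1,\dots,f_k)(x)-h(t_1,\dots,t_k)|<\varepsilon\}\in\mathcal U$, which is the claim.

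Next I would instantiate this with the two continuous maps $(s,t)\mapsto\min(s+t,1)$ and $s\mapsto 1-s$. Because the operations of $A$ are computed coordinatewise, $(f\oplus g)(x)=\min(f(x)+g(x),1)$ and $(\neg f)(x)=1-f(x)$, so the auxiliary statement yields $\Psi_{\mathcal U}(f\oplus g)=\Psi_{\mathcal U}(f)\oplus\Psi_{\mathcal U}(g)$ and $\Psi_{\mathcal U}(\neg f)=\neg\,\Psi_{\mathcal U}(f)$; together with $\Psi_{\mathcal U}(0)=\lim_{\mathcal U}0=0$ this shows $\Psi_{\mathcal U}$ is an MV-homomorphism. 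The kernel is then read off directly: $\ker\Psi_{\mathcal U}=\{f\in A:\lim_{\mathcal U}f=0\}$, which is $M_{\mathcal U}$ by definition, the reformulation via the sets $D(f,\varepsilon)$ being valid because $f\ge 0$ makes $|f(x)-0|<\varepsilon$ the same as $f(x)<\varepsilon$.

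I do not anticipate a genuine obstacle; the only step requiring care is the interchange of the ultrafilter limit with $\oplus$, and even that is just the continuity argument above, underwritten by the compactness of $[0,1]$ that guarantees the limits exist.
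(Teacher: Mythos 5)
Your proposal is correct and takes essentially the same approach as the paper: the paper's proof is exactly your continuity-plus-filter-intersection argument, carried out explicitly for $\oplus$ (choose $\delta$ by continuity of $\oplus$ at $(t,s)$, intersect the two $\mathcal{U}$-sets) with the case of $\neg$ noted as analogous, and the kernel identified directly from the definition of $M_{\mathcal{U}}$. Your only variation is packaging the identical $\varepsilon$--$\delta$ argument as a single auxiliary lemma for a general continuous $h:[0,1]^k\to[0,1]$ and instantiating it twice, which changes nothing of substance.
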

\begin{proof}
Let $f, g\in A$, and let $\lim _\mathcal{U}f=t$, $\lim _\mathcal{U}g=s$. We need to show that $\lim _\mathcal{U}(f\oplus g)=t\oplus s$. Let $\epsilon>0$, then since $\oplus:[0,1]\times [0,1]\to [0,1]$ is continuous, there exists $\delta>0$ such that for all $a,b\in[0,1]$ ($|a-t|<\delta$ and $|b-s|<\delta$ $\Rightarrow$ $|a\oplus b-t\oplus s|<\epsilon$). On the other hand, since $\lim _\mathcal{U}f=t$ and $\lim _\mathcal{U}g=s$, then $S:=\{x\in X:|f(x)-t|<\delta \}\in \mathcal{U}$ and $T:=\{x\in X:|g(x)-s|<\delta\}\in \mathcal{U}$. But, $S\cap T\subseteq \{x\in X:|f(x)\oplus g(x)-t\oplus s|<\epsilon\}$ and $S\cap T\in \mathcal{U}$, then $\{x\in X:|f(x)\oplus g(x)-t\oplus s|<\epsilon\}\in \mathcal{U}$. Thus, $f\oplus g$ converges to $t\oplus s$ along $\mathcal{U}$. Using the fact that $\neg:[0,1]\to [0,1]$ is continuous, one can prove in a  similar manner that $\lim _\mathcal{U}\neg f= \neg \lim _\mathcal{U}f$. Hence, $\Psi_{\mathcal{U}}(\neg f)=\neg \Psi_{\mathcal{U}}(f)$ and $\Psi_{\mathcal{U}}(f\oplus g)=\Psi_{\mathcal{U}}(f)\oplus \Psi_{\mathcal{U}}(g)$, for all $f, g\in A$. Therefore, $\Psi$ is an MV-homomorphism. Clearly, from the definition $\ker \Psi_{\mathcal{U}}=M_{\mathcal{U}}$.
\end{proof}
The set of maximal ideals of $A$ (in fact any MV-algebra) carries a natural Zariski topology. The resulting space is known as the maximal spectral space of $A$ (see e.g., \cite[Chap. 4]{Mu}). Since $A=\prod_{x\in X}\L_{n_x}$ is a profinite MV-algebra, it is known that the maximal spectral space of $A$ is the Stone \v{C}ech compactification $\beta X$ of $X$ endowed with the discrete topology. More precisely, the following result holds.
\begin{thm}
The map $M\mapsto \mathcal{U}_M$ is a homeomorphism from $Max(A)\to \beta X$ whose inverse is $\mathcal{U}\mapsto M_{\mathcal{U}}$.
\end{thm}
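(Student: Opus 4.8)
The plan is to prove directly that $M\mapsto\mathcal U_M$ and $\mathcal U\mapsto M_{\mathcal U}$ are mutually inverse bijections and then upgrade the bijection to a homeomorphism by a compactness argument. Everything is organized around the Boolean center: since each $B(\L_{n_x})=\{0,1\}$, we have $B(A)\cong\mathcal P(X)$, a subset $S\subseteq X$ corresponding to the idempotent $\chi_S\in A$ that is $1$ on $S$ and $0$ off $S$. First I would check that $\mathcal U\mapsto M_{\mathcal U}$ lands in $\text{Max}(A)$: by Proposition \ref{alglimit}, $M_{\mathcal U}=\ker\Psi_{\mathcal U}$, so $A/M_{\mathcal U}$ embeds in $[0,1]$, and since $\Psi_{\mathcal U}(1)=1\neq0$ the quotient is a nontrivial subalgebra of $[0,1]$, hence simple \cite{C2}; thus $M_{\mathcal U}$ is maximal.

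The heart of the argument is the translation lemma $S\in\mathcal U_M\iff\chi_{S^c}\in M$. The reverse implication is immediate: taking $f=\chi_{S^c}$ and $\epsilon=\tfrac12$ gives $D(f,\tfrac12)=S$, so $S\in\mathcal U_M$. The forward implication is the only genuine MV-computation: if $D(f,\epsilon)\subseteq S$ for some $f\in M$, then $f\geq\epsilon$ on $S^c$, so for $n\geq1/\epsilon$ the $n$-fold sum $f\oplus\cdots\oplus f$ equals $1$ on $S^c$ and therefore dominates $\chi_{S^c}$; as $M$ is an ideal, $\chi_{S^c}\in M$. Consequently $\mathcal U_M=\{S:\chi_{S^c}\in M\cap B(A)\}$ is precisely the filter dual to the ideal $M\cap B(A)$ of $\mathcal P(X)$. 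Because $A/M$ is simple its Boolean center is $\{0,1\}$, so the natural map $B(A)\to B(A/M)$ is onto with kernel $M\cap B(A)$, giving $B(A)/(M\cap B(A))\cong\{0,1\}$; hence $M\cap B(A)$ is a \emph{maximal} ideal of $\mathcal P(X)$ and its dual $\mathcal U_M$ is genuinely an ultrafilter. I expect this to be the main obstacle, since it is exactly where maximality of $M$ must be converted into the ultra property of $\mathcal U_M$.

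With the lemma established, both composites collapse. For $\mathcal U_{M_{\mathcal U}}=\mathcal U$, the lemma gives $\chi_{S^c}\in M_{\mathcal U}$ iff $\lim_{\mathcal U}\chi_{S^c}=0$ iff $S\in\mathcal U$, so $\mathcal U_{M_{\mathcal U}}=\mathcal U$. For $M_{\mathcal U_M}=M$, every $f\in M$ satisfies $D(f,\epsilon)\in\mathcal U_M$ for all $\epsilon>0$ directly from the definition of $\mathcal U_M$, so $f\in M_{\mathcal U_M}$ and $M\subseteq M_{\mathcal U_M}$; as $M$ is maximal and $M_{\mathcal U_M}$ is proper, equality follows. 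Hence the two maps are mutually inverse bijections.

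It remains to see that the bijection is a homeomorphism. The sets $\bar S=\{\mathcal U:S\in\mathcal U\}$ $(S\subseteq X)$ form a clopen basis of $\beta X$, and by the translation lemma the preimage of $\bar S$ under $M\mapsto\mathcal U_M$ is $\{M:\chi_{S^c}\in M\}$, which is Zariski-closed in $\text{Max}(A)$; its complement is the preimage of $\overline{S^c}$ and is closed as well, so each preimage is clopen and the map is continuous. Since $\text{Max}(A)$ is compact \cite{C2} and $\beta X$ is Hausdorff, this continuous bijection is automatically a homeomorphism, completing the proof.
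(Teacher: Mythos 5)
Your proposal is correct, and it is worth pointing out that the paper itself supplies \emph{no} proof of this theorem: it is presented as a known description of the maximal spectrum of $A=\prod_{x\in X}\L_{n_x}$ (``for lack of direct reference\dots we include it''), with only Proposition \ref{alglimit} proved as an ingredient. So your argument fills in exactly what the paper leaves implicit, and it does so along what is surely the intended route: reduce to the Boolean center. The translation lemma $S\in\mathcal{U}_M\iff\chi_{S^c}\in M$ is the real content, and your MV-computation for the forward direction is sound --- $D(f,\epsilon)\subseteq S$ forces $f\geq\epsilon$ on $S^c$, so the $n$-fold truncated sum $nf$ with $n\geq 1/\epsilon$ equals $1$ on $S^c$ and dominates $\chi_{S^c}$, and the two ideal axioms finish it. This correctly identifies $\mathcal{U}_M$ with the dual of the Boolean ideal $M\cap B(A)$ of $B(A)\cong\mathcal{P}(X)$, and your conversion of maximality of $M$ into the ultra property (simplicity of $A/M$ gives $B(A/M)=\{0,1\}$, so $M\cap B(A)$ is maximal in $\mathcal{P}(X)$) is the one step where a gap could have hidden; it doesn't. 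The composite computations are right, including the needed properness of $M_{\mathcal{U}_M}$ (since $\lim_{\mathcal{U}_M}1=1\neq 0$) before invoking maximality of $M$, and the maximality of $M_{\mathcal{U}}$ via the embedding $A/M_{\mathcal{U}}\hookrightarrow[0,1]$ and simplicity of nontrivial subalgebras of $[0,1]$ is standard from \cite{C2}. The topological upgrade is also the efficient one: preimages of the basic clopen sets $\bar{S}\subseteq\beta X$ are the sets $\{M:\chi_{S^c}\in M\}$, which are Zariski-closed with Zariski-closed complements, so the map is continuous, and a continuous bijection from the compact space $\mathrm{Max}(A)$ (compactness again from \cite{C2}) to the Hausdorff space $\beta X$ is a homeomorphism. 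In short, your proof amounts to restricting the correspondence to $B(A)$ and importing Stone duality for powerset algebras, which is presumably why the author felt entitled to omit the argument; as a freestanding proof it is complete.
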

Note that it follows that every maximal ideals of $A$ are of the form $M_{\mathcal{U}}$, where $\mathcal{U}$ is an ultrafilter of $X$. In addition, $M_{\mathcal{U}}$ is a principal maximal ideal of $A$ if and only if $\mathcal{U}$ is a principal ultrafilter of $X$. Moreover, note that from Proposition \ref{alglimit}, the quotient MV-algebra $A/M_{\mathcal{U}}$ is naturally isomorphic to the sub-MV-algebra of $[0,1]$ of all $\mathcal{U}$-limits of elements of $A$. We can apply this and obtain the following result.
\begin{prop} \label{bounded}
Let $A$ be a profinite MV-algebra such that $\eta(A)$ is bounded. Then every maximal ideal of $A$ has finite rank.
\end{prop}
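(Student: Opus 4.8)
The plan is to exploit the identification of the maximal ideals of $A$ with ultrafilter limits developed just above this statement. Recall that every maximal ideal of $A$ is of the form $M_{\mathcal{U}}$ for some ultrafilter $\mathcal{U}$ of $X$, and that by Proposition \ref{alglimit} the quotient $A/M_{\mathcal{U}}$ is isomorphic, via $\Psi_{\mathcal{U}}$, to the image of $\Psi_{\mathcal{U}}$, namely the sub-MV-algebra of $[0,1]$ consisting of all $\mathcal{U}$-limits of elements of $A$. Since a maximal ideal has finite rank precisely when its quotient is a finite chain $\L_n$, the whole task reduces to showing that each such image is finite.

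First I would use the boundedness of $\eta(A)$ to confine all elements of $A$ to a single finite value set. Suppose $n_x\leq N$ for all $x\in X$, and put $m=\mathrm{lcm}\{n_x-1:x\in X\}+1$, which is finite because the integers $n_x-1$ range over the bounded set $\{1,\ldots,N-1\}$. Since $n_x-1$ divides $m-1$ for every $x$, each $\L_{n_x}$ is a sub-MV-algebra of $\L_m$, so every $f\in A$ satisfies $f(x)\in\L_m$ for all $x$; that is, all elements of $A$ take their values in the single finite set $\L_m=\{0,\tfrac{1}{m-1},\ldots,1\}$.

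Next I would pin down the $\mathcal{U}$-limits. Fix an ultrafilter $\mathcal{U}$ and an arbitrary $f\in A$. The finitely many fibers $\{f^{-1}(t):t\in\L_m\}$ partition $X$, so exactly one of them, say $f^{-1}(t_0)$ with $t_0\in\L_m$, lies in $\mathcal{U}$. As $f$ is constant equal to $t_0$ on a member of $\mathcal{U}$, it converges to $t_0$ along $\mathcal{U}$, whence $\lim_{\mathcal{U}}f=t_0\in\L_m$. Thus the image of $\Psi_{\mathcal{U}}$ is contained in the finite set $\L_m$; being also a sub-MV-algebra of the chain $[0,1]$, it is a finite MV-chain and hence isomorphic to some $\L_n$ with $n\geq 2$. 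Therefore $A/M_{\mathcal{U}}\cong\L_n$, so $M_{\mathcal{U}}$ has finite rank, and since every maximal ideal arises this way, the claim follows.

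The only mildly delicate point is the reduction in the first step: one must verify that boundedness of $\eta(A)$ genuinely produces a common finite chain $\L_m$ with $\L_{n_x}\subseteq\L_m$ for all $x$, which rests on the elementary divisibility observation that each $n_x-1$ divides the least common multiple. Once every element of $A$ is trapped in a fixed finite value set, the ultrafilter-limit argument is immediate, since an ultrafilter restricted to a finite partition concentrates on exactly one block. I do not anticipate any serious obstacle.
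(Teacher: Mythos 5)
Your proof is correct, and while it shares the paper's outer frame---every maximal ideal of $A$ is $M_{\mathcal{U}}$ for some ultrafilter $\mathcal{U}$ of $X$, and by Proposition \ref{alglimit} it suffices to show that $\mathrm{Im}\,\Psi_{\mathcal{U}}$ is finite---the mechanism you use to get finiteness is genuinely different. The paper takes $n$ to be the maximum of $\eta(A)$ and argues by contradiction that every nonzero $t\in \mathrm{Im}\,\Psi_{\mathcal{U}}$ satisfies $nt=1$: pointwise each $f\in A$ satisfies $f(x)=0$ or $(nf)(x)=1$, so if $\lim_{\mathcal{U}}f\neq 0$ but $\lim_{\mathcal{U}}nf\neq 1$, the disjoint sets $Z(f)$ and $X\setminus D(f,\delta_1)$ would both lie in $\mathcal{U}$. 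That bounds nonzero limits below by $1/n$, after which finiteness of the image follows from a step the paper leaves implicit (the image is closed under truncated subtraction, so its distinct elements differ by at least $1/n$, giving at most $n+1$ of them). You instead trap all values in a single finite chain: with $m-1=\mathrm{lcm}\{n_x-1:x\in X\}$ each $\L_{n_x}\subseteq \L_m$, so every $f\in A$ is $\L_m$-valued, and since an ultrafilter contains exactly one block of a finite partition, $\lim_{\mathcal{U}}f\in\L_m$ outright. Your route is more elementary---no contradiction and no MV-identity---and it makes the paper's implicit finiteness step unnecessary by exhibiting $\mathrm{Im}\,\Psi_{\mathcal{U}}\subseteq\L_m$ explicitly; it even yields the extra arithmetic information that the rank $r$ of every maximal ideal satisfies $(r-1)\mid(m-1)$. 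What the paper's computation buys instead is the quantitative fact, extracted purely from the algebra of $\oplus$, that nonzero $\mathcal{U}$-limits are at least $1/n$ with $n=\max\eta(A)$. One cosmetic remark: the lcm/divisibility step, though correct, is not strictly needed---boundedness of $\eta(A)$ already makes $\bigcup_{x\in X}\L_{n_x}$ a finite value set, which is all your partition argument uses---but packaging it as the single chain $\L_m$ is what produces the divisibility bound on ranks.
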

\begin{proof}
Suppose that $\{n_x:x\in X\}$ has a maximum element, say $n$ and let $M$ be a maximal ideal of $A$. Then, $M=M_{\mathcal{U}}$ for some ultrafilter $\mathcal{U}$ of $X$. We prove that Im$\Psi_{\mathcal{U}}$ is finite. It is enough to show that for every nonzero $t\in \text{Im}\Psi_{\mathcal{U}}$, $nt=1$. So, let $f\in A$ such that $\lim _\mathcal{U}f\ne 0$, there there exists $\delta_1>0$ such that $D(f, \delta_1)\notin \mathcal{U}$. Hence, $X\setminus D(f, \delta_1)\in \mathcal{U}$. We need to show that $\lim _\mathcal{U}nf=1$. But contradiction, suppose that $\lim _\mathcal{U}nf\ne 1$, then there exists $\delta_2>0$ such that $\{x\in X:|nf(x)-1|<\delta_2\}\notin \mathcal{U}$. In particular, note that $\delta_2<1$. In addition, since for all $x\in X$, $f(x)=0$ or $nf(x)=1$, then $\{x\in X:|nf(x)-1|<\delta_2\}=\{x\in X:nf(x)=1\}$. Thus, $Z(f):=\{x\in X:f(x)=0\}\in \mathcal{U}$, since $Z(f)=X\setminus \{x\in X:nf(x)=1\}$. Therefore, $\emptyset=Z(f)\cap (X\setminus D(f, \delta_1))\in \mathcal{U}$, which is a contradiction.

\end{proof}
It follows from Corollary \ref{finiterank-str} that the only profinite MV-algebras $A$ with bounded $\eta(A)$ are the finite ones. \par
The next examples illustrate the situation when $\eta(A)$ is unbounded.
Unlike Boolean algebras, there exists infinite strongly complete MV-algebras.
\begin{ex}
Let $A=\prod_{n=1}^{\infty}\L_{n+1}$. We claim that every non-principal maximal ideal of $A$ has infinite rank. Let $M$ be a non-principal maximal ideal of $A$, then $M=M_{\mathcal{U}}$ for some free ultrafilter $\mathcal{U}$ of $\mathbb{N}$. An application of the Archimedean property of real numbers gives the following fact: For every $0\leq a<b\leq 1$, there exists a natural number $N$ such that for all integers $n\geq N$, there exists an integer $k_n$ satisfying $a\leq k_n/n\leq b$. It follows that for every $0\leq a<b\leq 1$, there exists a natural number $N$ and $f\in A$ such that $a\leq f(n)\leq b$ for all integers $n\geq N$. Since $a\leq f(n)\leq b$ for all integers $n\geq N$, then $a\leq \lim _\mathcal{U}f\leq b$ as $\mathcal{U}$ is a free ultrafilter. Thus, Im$\Psi_{\mathcal{U}}$ is dense in $[0,1]$, and in particular $A/M$ is infinite. Note that indeed $A/M\cong [0,1]$ (see for e.g.,\cite{Mu} ).
\end{ex}
Not every profinite MV-algebra $A$ with $\eta(A)$ unbounded is strongly complete as the next example shows.
\begin{ex}
Let $A=\prod_{k=1}^{\infty}A_k$, where $A_{2k-1}=\L_2$ and $A_{2k}=\L_{2k}$. Let $S$ be the set of odd natural numbers and $\mathcal{U}$ be any free ultrafilter of $\mathbb{N}$ containing $S$. We claim that $A/M_{\mathcal{U}}\cong \L_2$. In fact, suppose that $f\notin M_{\mathcal{U}}$, then there exists $\delta>0$ such that $D(f,\delta)\notin \mathcal{U}$. Hence $\mathbb{N}\setminus D(f,\delta)\in \mathcal{U}$ and $S\cap (\mathbb{N}\setminus D(f,\delta))\in \mathcal{U}$. But, $S\cap (\mathbb{N}\setminus D(f,\delta))=\{n\in \mathbb{N}:f(n)=1\}\subseteq \{n\in \mathbb{N}:|f(n)-1|<\epsilon \}$ for every $\epsilon>0$. Thus, $\{n\in \mathbb{N}:|f(n)-1|<\epsilon \}\in \mathcal{U}$ for every $\epsilon>0$. Hence $ \lim _\mathcal{U}f=1$, and Im$\Psi_{\mathcal{U}}=\L_2$. Therefore, $M_{\mathcal{U}}$ is a non-principal maximal ideal of $A$ of finite rank. Whence, $A$ is not strongly complete. 
\end{ex}
\section{Conclusion and final remarks}
In addition to characterizing all compact Haussdorff, and Stone MV-algebras, we have completely characterized the MV-algebras that are isomorphic to their own profinite completions. An important related question (which is a weakening of strong completenes) is about (profinite) MV-algebras that are profinite completions of some MV-algebras. It is known \cite[Corollary 5.10]{BM} that every profinite Boolean algebra is a profinite completion of some Boolean algebra. It is also known that a profinite bounded distributive lattices that are isomorphic to profinite completions are up to isomorphism the lattices of upsets of representable posets \cite[Theorem 5.3]{BM}. The same question remains open for Heyting algebras (see comments after the proof of \cite[Corollary 5.9]{BM}). It is our goal to tackle this question for MV-algebras in an upcoming project.

\end{document}